\DeclarePairedDelimiter\floor{\lfloor}{\rfloor}
\newcommand{\erase}[1]{}
\newtheorem{theorem}{Theorem}[section]
\newtheorem{lemma}[theorem]{Lemma}
\newtheorem{proposition}[theorem]{Proposition}
\newtheorem{corollary}[theorem]{Corollary}
\newtheorem{_algorithm}[theorem]{Algorithm}
\newtheorem{_procedure}[theorem]{Procedure}
\newtheorem{_definition}[theorem]{Definition}
\newenvironment{definition}{\begin{_definition}\rm}{\end{_definition}}
\newtheorem{_remark}[theorem]{\it Remark}
\newenvironment{remark}{\begin{_remark}\rm}{\end{_remark}}
\newtheorem{_example}[theorem]{Example}
\newtheorem{_assumption}[theorem]{Assumption}
\newtheorem{_construction}[theorem]{Construction}
\newtheorem{_claim}[theorem]{Claim}
\newtheorem{_conjecture}[theorem]{Conjecture}
\numberwithin{equation}{section}
\numberwithin{table}{section}
\numberwithin{figure}{section}
\newcommand{\F}{\mathord{\mathbb F}}
\renewcommand{\P}{\mathord{\mathbb  P}}
\newcommand{\Q}{\mathord{\mathbb  Q}}
\newcommand{\R}{\mathord{\mathbb R}}
\newcommand{\Z}{\mathord{\mathbb Z}}
\newcommand{\CCC}{\mathord{\mathcal C}}
\newcommand{\OOO}{\mathord{\mathcal O}}
\newcommand{\QQQ}{\mathord{\mathcal Q}}
\newcommand{\SSS}{\mathord{\mathcal S}}
\newcommand{\SSSS}{\mathord{\mathfrak S}}
\newcommand{\mapdownsurj}{
\hbox{$\bigm\downarrow$}
\llap{\hbox{\raise 2pt\hbox{$\bigm\downarrow$}}}%
\vstrechmapdown
}
\newcommand{\mapupsurj}{
\hbox{$\bigm\uparrow$}
\llap{\hbox{\raise 2pt\hbox{$\bigm\uparrow$}}}%
\vstrechmapup
}
\newcommand{\inj}{\hookrightarrow}
\newcommand{\surj}{\mathbin{\to \hskip -7pt \to}}
\newcommand{\isom}{\xrightarrow{\sim}}
\newcommand{\set}[2]{\{\; {#1} \; \mid \; {#2} \;  \}}
\newcommand{\shortset}[2]{\{ {#1} \,|\, {#2}   \}}
\newcommand{\gen}[1]{\langle {#1}  \rangle}
\newcommand{\tensor}{\otimes}
\newcommand{\sprime}{\sp\prime}
\newcommand{\spar}[1]{\sp{(#1)}}
\newcommand{\sptimes}{\sp{\times}}
\newcommand{\dual}{\sp{\vee}}
\newcommand{\inv}{\sp{-1}}
\newcommand{\GL}{\mathord{\mathrm{GL}}}
\newcommand{\PSL}{\mathord{\mathrm{PSL}}}
\newcommand{\OG}{\mathord{\mathrm{O}}}
\newcommand{\id}{\mathord{\mathrm{id}}}
\newcommand{\Ker}{\operatorname{\mathrm{Ker}}\nolimits}
\newcommand{\Image}{\operatorname{\mathrm{Im}}\nolimits}
\newcommand{\Aut}{\operatorname{\mathrm{Aut}}\nolimits}
\newcommand{\pr}{\mathord{\mathrm{pr}}}
\newcommand{\mystruth}[1]{\phantom{\hbox{\vrule height #1}}}
\newcommand{\mystrutd}[1]{\phantom{\hbox{\vrule depth #1}}}
\newcommand{\intf}[1]{\langle #1\rangle}
\newcommand{\intfvoid}{\intf{\phantom{i}, \phantom{i}}}
\newcommand{\discg}{D}
\newcommand{\discf}{q}
\newcommand{\vv}{\mathord{\textit{\textbf{v}}}}
\newcommand{\vx}{\mathord{\textit{\textbf{x}}}}
\newcommand{\Stab}{\mathord{\rm Stab}}
\newcommand{\str}[1]{\hbox{{\tt "#1"}}}
\begin{document}
\title[An even extremal lattice of rank $64$]%
{An even extremal lattice of rank $64$}
\author{Ichiro Shimada}
\address{Department of Mathematics, 
Graduate School of Science, 
Hiroshima University,
1-3-1 Kagamiyama, 
Higashi-Hiroshima, 
739-8526 JAPAN}
\email{ichiro-shimada@hiroshima-u.ac.jp}

\subjclass[2010]{11H31, 94B05, 11H56}
\thanks{This work was supported by JSPS KAKENHI Grant Number 16H03926 and 16K13749.}



\begin{abstract}
We construct an even extremal lattice of rank  $64$
by means of a generalized  quadratic residue code.
\end{abstract}
\maketitle

\section{Introduction}
A \emph{lattice} is a free $\Z$-module $L$ of finite rank
with a symmetric bilinear form
$$
\intfvoid\colon L\times L\to \Z
$$
that makes $L\tensor\R$ a positive-definite real quadratic space.
Let $L$ be a lattice.
The group of automorphisms  of $L$ is denoted by $\OG(L)$.
For simplicity, we write $x^2$ instead of  $\intf{x,x}$ for $x\in L$.
We say that $L$ is \emph{even} (or of type II) if $x^2 \in 2\Z$  holds for all $x\in L$.
In this paper,
we treat only even lattices.
Since $\intfvoid$ is non-degenerate,
the mapping $x\mapsto \intf{x, -}$ embeds $L$ into 
the \emph{dual lattice}
$$
L\dual:=\set{x\in L\tensor\Q}{\intf{x, y}\in \Z\;\; \textrm{for all}\;\; y\in L}.
$$
We say that  $L$ is \emph{unimodular} if this embedding is an isomorphism.
We put
$$
\min (L):=\min\; \shortset{x^2}{ x\in L\setminus \{0\}}.
$$
It is well-known that, if $L$ is an even unimodular lattice, then
its rank $n$ is divisible by $8$ and $\min (L)$ satisfies
\begin{equation}\label{eq:minL}
\min (L)\le 2+2 \floor*{\frac{n}{24}}.
\end{equation}
\begin{definition}
We say that an even unimodular lattice $L$ of rank $n$ is \emph{extremal}
if the equality holds in~\eqref{eq:minL}.
\end{definition}
Extremal lattices are important and interesting,
because they give rise to dense sphere-packings.
%
Extremal lattices of rank $\le 24$ are completely classified.
The famous Leech lattice 
is characterized as the unique (up to isomorphism) extremal lattice of rank  $24$.
On the other hand, the classification of extremal lattices of rank $\ge 32$ seems to be very difficult.
The known examples of extremal lattices
are listed in the website~\cite{NS} administrated by  Nebe and Sloane,
in  Conway and Sloane~\cite[Chapter 1]{CS99},
or in  Gaborit~\cite[Table 3]{Gab2004}.
\par
As is extensively described in Conway and Sloane~\cite{CS99},
there exist various methods of constructing a lattice from a code.
The binary extended quadratic residue codes
play an important role in these constructions.
The most classical examples are that
the extended Hamming code yields
the extremal lattice $E_8$ of rank $8$,
and that the extended Golay code  yields
the Niemeier lattice of type $24 A_1$.
Various generalizations of quadratic residue codes
are investigated.
In particular, in Bonnecaze, Sol\'e and Calderbank~\cite{BPC},
the Leech lattice is constructed by 
a generalized quadratic residue code of length $24$ with components in  $\Z/4\Z$.
See also Chapman and Sol\'e~\cite{CS1996} and Harada and Kitazume~\cite{HK2000}.
\par
In this paper,
we consider a quadratic residue code
with components  in the \emph{discriminant group} $\discg_R:=R\dual/R$
of an even lattice $R$ of small rank,
and construct a lattice $L$ of large rank
as an  even overlattice of the orthogonal direct-sum of copies of $R$
by using the  code as  the gluing data.
As an application, we obtain the following:
\begin{theorem}\label{thm:main}
There exists an extremal lattice $L_{\QQQ}$ of rank $64$
whose automorphism group $\OG(L_{\QQQ})$ is of order $119040$.
This group $\OG(L_{\QQQ})$  contains 
a subgroup $\Gamma_{\QQQ}$  of index $2$ that fits in the exact sequence
\begin{equation}\label{eq:Gammaexact}
0 \;\; \to (\Z/2\Z)^2\;\;  \to\;\; \Gamma_{\QQQ}\;\; \to \;\; \PSL_2(31)\;\; \to \;\; 1.
\end{equation}
\end{theorem}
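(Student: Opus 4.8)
The plan is to realize $L_{\QQQ}$ explicitly as a glued overlattice, following the method announced in the introduction, and then to determine its automorphism group by combining the intrinsic symmetries of the gluing code with a direct computation. First I would fix the building block: a rank-$2$ even lattice $R$ whose minimum equals $6$. This choice is essentially forced, since $R^{\oplus 32}\subseteq L_{\QQQ}$ gives $\min(L_{\QQQ})\le\min(R)$, while extremality of a rank-$64$ even unimodular lattice means $\min(L_{\QQQ})=2+2\floor*{\frac{64}{24}}=6$. With the $32$ coordinates indexed by the projective line $\mathbb{P}^1(\mathbb{F}_{31})$, on which $\PSL_2(31)$ acts, I would build a generalized quadratic residue code $C$ of length $32$ with entries in $\discg_R=R\dual/R$, using the quadratic residues modulo $31$ to define the code. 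I would then set $L_{\QQQ}$ to be the preimage of $C$ under the reduction map $(R\dual)^{\oplus 32}\to\discg_R^{\,\oplus 32}$. Checking that $C$ is self-dual and isotropic for the discriminant form $q_R\colon\discg_R\to\Q/2\Z$ shows that $L_{\QQQ}$ is even and satisfies $L_{\QQQ}=L_{\QQQ}\dual$, i.e.\ it is an even unimodular lattice of rank $64$.

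The crux is extremality, namely $\min(L_{\QQQ})=6$, for which I must rule out any vector of norm $2$ or $4$. A vector of $L_{\QQQ}$ has the form $x=(x_1,\dots,x_{32})$ with $x_i\in R\dual$ whose reductions form a codeword $c\in C$, and its norm is $\sum_i x_i^2$. When $c=0$ the vector lies in $R^{\oplus 32}$ and has norm $\ge\min(R)=6$; when $c\ne 0$ I would combine the minima of the nonzero cosets of $R$ in $\discg_R$ with the minimum weight of $C$, arguing that a nonzero codeword is supported on enough coordinates that $\sum_i x_i^2\ge 6$. I expect this minimum-norm verification to be the main obstacle of the construction, and it will almost certainly be settled by an explicit finite computation over the low-weight codewords of $C$ and the short representatives of the cosets in $\discg_R$.

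For $\OG(L_{\QQQ})$ I would proceed by matching a lower and an upper bound. On the lower side, the coordinate permutations coming from the action of $\PSL_2(31)$ on $\mathbb{P}^1(\mathbb{F}_{31})$ preserve the extended quadratic residue code and hence, possibly after being combined with automorphisms of the components, lift to isometries of $L_{\QQQ}$; together with the block automorphisms induced by $\OG(R)$ that are compatible with $C$ they generate a group of the asserted order $119040$, inside which the part inducing $\PSL_2(31)$ on the blocks is an index-$2$ subgroup $\Gamma_{\QQQ}$. To obtain the exact sequence~\eqref{eq:Gammaexact} I would examine the homomorphism $\Gamma_{\QQQ}\to\mathrm{Sym}(32)$ recording the induced permutation of the $32$ blocks: its image is $\PSL_2(31)$, and its kernel, consisting of the block-fixing isometries compatible with $C$, is the elementary abelian group $(\Z/2\Z)^2$. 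The hardest remaining point is the upper bound $|\OG(L_{\QQQ})|=119040$; in practice I would compute $\OG(L_{\QQQ})$ directly from the finite set of norm-$6$ vectors of $L_{\QQQ}$ via a standard minimal-vectors/automorphism routine and verify that its order coincides with that of the group already exhibited, which together with the identification of $\Gamma_{\QQQ}$ completes the proof.
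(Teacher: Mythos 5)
Your construction and your extremality strategy coincide with the paper's: the same block $R$ with Gram matrix $\left[\begin{smallmatrix}6&1\\1&6\end{smallmatrix}\right]$ and $\discg_R\cong\Z/35\Z$, the same generalized quadratic residue code of length $32$ glued along $\mathbb{P}^1(\F_{31})$, and the same verification of $\min(L_{\QQQ})=6$ by combining the coset minima $\lambda(n)$ on $\discg_R$ with a finite search over codewords of small weighted minimum $\mu(w)\le 4$ (this is exactly the paper's $75$-day backtrack computation, with symmetry reduction by $\overline{\Gamma}_{\QQQ}$). Your identification of the exact sequence~\eqref{eq:Gammaexact} is also sound in outline, though note that both the kernel computation (only the \emph{diagonal} copy of $H(R)\cong(\Z/2\Z)^2$ preserves $\QQQ$) and the surjectivity onto $\PSL_2(31)$ (the inversion $\xi$ lifts only after being combined with a specific non-diagonal element of $H(R)^{32}$) require explicit computations that the paper carries out in Proposition~\ref{prop:eqMalgo}.

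The genuine gap is in the automorphism-group part. You claim that the $\PSL_2(31)$-permutations together with the code-compatible block automorphisms from $\OG(R)$ already generate a group of order $119040$, inside which $\Gamma_{\QQQ}$ has index $2$. This is false for this lattice: the code-compatible symmetries give \emph{exactly} $\Gamma_{\QQQ}$, of order $59520=4\cdot|\PSL_2(31)|$, and the paper remarks explicitly that every $g\in\OG(L_{\QQQ})\setminus\Gamma_{\QQQ}$ fails to preserve the sublattice $R^{32}$, hence cannot arise from any automorphism of $\QQQ$. So no amount of block bookkeeping can exhibit the index-$2$ overgroup; the extra isometry is invisible from the code. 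The paper produces it only through a separate heavy computation: enumerating the full set $\SSS$ of $2611200$ norm-$6$ vectors, isolating by inner-product ``type'' a small subset $\SSS_0$ of $23808$ vectors, fixing a basis sequence $V_0$ of $64$ vectors of $\SSS_0$ with prescribed Gram data, and enumerating the ten Gram-equivalent sequences $V_1,\dots,V_{10}$, whose associated matrices $g_1,\dots,g_{10}$ turn out to preserve $L_{\QQQ}$ and generate, with $\Gamma_{\QQQ}$, a group of order $119040$. Your fallback --- running a standard minimal-vectors automorphism routine --- would in principle settle the order, but at this size naive routines are precisely what failed here ({\tt GAP}'s {\tt ShortestVectors} could not even compute the minimum in reasonable time), so as written your argument establishes neither the lower bound $119040$ nor the index-$2$ claim without substituting some equivalent of the paper's bespoke $\SSS_0$-matching computation.
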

The code $\QQQ$ that is  used in the construction of $L_{\QQQ}$  is
a generalized quadratic residue code  of length $32$
 with components in the discriminant group  $\discg_R\cong \Z/35\Z$
 of the lattice
$$
R=\left[\begin{array}{cc}  6 & 1 \\ 1 & 6 \end{array}\right].
$$
%
\par
In~\cite{Que84}, Quebbemann constructed 
(possibly several) extremal lattices of rank $64$
as overlattices of the orthogonal direct-sum $E_8(3)^8$
of $8$ copies of $E_8(3)$.
(See also~\cite[Chapter 8.3]{CS99}.)
Here  $E_8(3)$ denotes the lattice obtained from the lattice $E_8$
by multiplying the intersection form by $3$.
We have the following:
\begin{proposition}\label{prop:EEE8}
The lattice $L_{\QQQ}$ does not contain $E_8(3)$ as a sublattice.
\end{proposition}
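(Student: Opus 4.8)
The essential point is that $E_8(3)$ and $L_{\QQQ}$ share the same minimal norm, so any embedding must match minimal vectors with minimal vectors. First I would record that, since $L_{\QQQ}$ is extremal of rank $64$, equality in~\eqref{eq:minL} gives $\min(L_{\QQQ}) = 2 + 2\floor*{64/24} = 6$, which is exactly $\min(E_8(3)) = 3\cdot\min(E_8) = 6$. Hence if $\iota\colon E_8(3)\hookrightarrow L_{\QQQ}$ were an isometric embedding, then $\iota$ would carry the $240$ roots of $E_8(3)$, all with $x^2 = 6$, to minimal vectors of $L_{\QQQ}$. Since the $8$ simple roots generate $E_8(3)$ and have Gram matrix $3C$, where $C$ is the Cartan matrix of $E_8$ (diagonal $2$, off-diagonal in $\{0,-1\}$), Proposition~\ref{prop:EEE8} becomes equivalent to the assertion that the set $M$ of vectors $x\in L_{\QQQ}$ with $x^2 = 6$ contains no eight vectors whose Gram matrix is $3C$.

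Next I would exploit the explicit construction. As $L_{\QQQ}$ is the even overlattice of $R^{\oplus 32}$ glued along $\QQQ$, and $\min(R) = 6$ is attained only by the four vectors $\pm e_1,\pm e_2$ in each summand, I would enumerate $M$ by computer and split it as $M = M_0\sqcup M_1$, where $M_0$ consists of the $4\cdot 32 = 128$ \emph{trivial} vectors supported on a single copy of $R$, and $M_1$ of the \emph{glue} vectors arising from nonzero codewords. For the trivial part the inner products are immediate: within one copy $\intf{e_1,e_2} = \pm1$ or $\intf{e_1,\pm e_1} = \pm6$, while distinct copies are orthogonal, so all inner products among $M_0$ lie in $\{0,\pm1,\pm6\}$; in particular none equals $\pm3$.

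Now every edge of the $E_8$ Dynkin diagram forces a pair of simple roots with inner product $-3$, i.e.\ a pair of vectors in $M$ meeting in $\pm3$. Thus it suffices to control such pairs, and I would first try to prove the \emph{stronger} statement that no two vectors of $M$ have inner product $\pm3$ at all; this already excludes even $A_2(3)$, and a fortiori $E_8(3)$. This reduces to a finite check of the integers $\intf{e_j^{(i)}, g}$ pairing trivial vectors against glue vectors, together with the pairings among glue vectors, all computed from the representatives in $R\dual$ attached to the codewords of $\QQQ$. Should pairs with inner product $\pm3$ nevertheless occur, the fallback is to build the graph on $M/\{\pm1\}$ whose edges are the $\pm3$-pairs, and to verify directly that no eight of its vertices, with suitable signs, have Gram matrix $3C$.

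The main obstacle is making this verification exhaustive yet feasible: a naive search over $8$-subsets of $M$ is hopeless. I would cut it down using the symmetry of $L_{\QQQ}$, namely the action of $\OG(L_{\QQQ})$ from Theorem~\ref{thm:main}, and in particular of the subgroup $\Gamma_{\QQQ}$ surjecting onto $\PSL_2(31)$, on $M$ and on the $\pm3$-graph: candidate $E_8$-seeds need only be examined up to this action. I expect the crux to be precisely this orbit analysis — determining the few $\OG(L_{\QQQ})$-orbits of adjacent minimal-vector pairs and showing that none extends to a frame with Gram matrix $3C$ — rather than any single number-theoretic identity.
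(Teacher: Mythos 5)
Your fallback plan is essentially the paper's actual proof, but your preferred first route is demonstrably a dead end, so let me separate the two. The paper enumerates the set $\SSS$ of all $2611200$ vectors of square-norm $6$ (the count is certified in advance by the theta series, which is uniquely determined for an extremal even unimodular lattice of rank $64$ --- this is the completeness certificate that your phrase ``enumerate $M$ by computer'' glosses over), decomposes $\SSS$ into $56$ orbits under $\Gamma_{\QQQ}$, normalizes $\iota(\varepsilon_1)$ to one of the $56$ orbit representatives, and then rules out by backtrack search any $8$-tuple in $\SSS$ realizing the Gram matrix of the standard basis of $E_8(3)$; this is precisely your ``seeds up to symmetry'' search, except that one must use $\Gamma_{\QQQ}$ rather than $\OG(L_{\QQQ})$, since the full automorphism group is only determined \emph{after} (and by means of) the computation of $\SSS$. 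However, your hoped-for stronger statement --- that no two minimal vectors have inner product $\pm 3$ --- is false, and the paper's own data shows it emphatically: the type of the vectors in $\SSS_0$ is $[1377392,\,578256,\,38343,\,304,\,1]$, so each such $v$ has $t_3(v)=304$ vectors $x\in\SSS$ with $\intf{x, v}=3$; fact~\eqref{eq:seven} exhibits, for each $v\in\SSS_0$, exactly seven vectors $v'\in\SSS_0$ with $\intf{v, v'}=-3$; and the computation of $|\OG(L_{\QQQ})|$ rests on $64$-term chains $v_1,\dots,v_{64}$ with $\intf{v_i, v_{i+1}}=-3$. So the $\pm3$-graph is far from empty and there is no escaping the backtrack search. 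Your split of $M$ into the $128$ trivial vectors $\pm e_\nu^{(i)}$ and the glue vectors is consistent with the paper (the orbit $o_1=E\cup -E$ of size $128$ in Table~\ref{table:orbdecomp}), and your observation that trivial vectors pair only in $\{0,\pm1,\pm6\}$ is correct, but the paper does not need this decomposition for the proposition; the inner products $\pm 3$ arise entirely from glue vectors, which is exactly why the problem cannot be settled by inspection and requires the orbit-reduced exhaustive search you describe as the fallback.
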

\begin{corollary}
The lattice $L_{\QQQ}$ cannot be obtained by Quebbemann's construction.
\end{corollary}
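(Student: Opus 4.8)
The plan is to reduce the statement to a finite analysis of the minimal vectors of $L_{\QQQ}$. Since $L_{\QQQ}$ is extremal of rank $64$, Theorem~\ref{thm:main} together with~\eqref{eq:minL} gives $\min(L_{\QQQ})=6$, while $\min(E_8(3))=3\cdot\min(E_8)=6$ as well. Hence, if $E_8(3)$ were a sublattice of $L_{\QQQ}$, then the $240$ minimal vectors of $E_8(3)$ (the rescaled roots of $E_8$) would all be minimal vectors of $L_{\QQQ}$, because the form on the sublattice is the restriction of $\intfvoid$. Moreover these $240$ vectors generate $E_8(3)$, and their pairwise inner products, being $3$ times the inner products of roots of $E_8$, lie in $\{0,\pm 3,\pm 6\}$; in particular they are all divisible by $3$. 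Thus it suffices to show that the finite set $\mathcal M$ of minimal vectors of $L_{\QQQ}$ contains no $240$ vectors whose pairwise inner products all lie in $3\Z$ and which realize the inner-product pattern of the $E_8$ root system.

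First I would describe $\mathcal M$ explicitly from the construction of $L_{\QQQ}$ as an even overlattice of $R^{32}$ glued along $\QQQ$. The minimal vectors supported on a single summand $R$ are the four vectors $\pm e_1,\pm e_2$ with $e_1^2=e_2^2=6$ and $\intf{e_1,e_2}=1$; these contribute $128$ \emph{coordinate} minimal vectors, whose pairwise inner products lie in $\{0,\pm 1,\pm 6\}$. The remaining minimal vectors arise from nonzero glue, that is, from vectors of $(R\dual)^{32}$ that become integral in $L_{\QQQ}$ via a codeword of $\QQQ$, and these are enumerated directly from the explicit code. The key structural input is that $\discg_R\cong\Z/35\Z$ has order coprime to $3$, so that $R\tensor\Z_3$, and hence $L_{\QQQ}\tensor\Z_3$, is unimodular over $\Z_3$ and is unaffected by the gluing; this constrains which pairs of minimal vectors can have inner product divisible by $3$. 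Note already that two distinct coordinate vectors never have inner product $\pm 3$, so every adjacent pair of roots in a hypothetical copy of $E_8(3)$ must involve a glue vector.

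The decisive step is then to verify that the graph on $\mathcal M$ whose edges join pairs with inner product $\pm 3$ contains no $E_8$ root subsystem; equivalently, that every sublattice of $L_{\QQQ}$ generated by minimal vectors with mutual inner products in $3\Z$ has rank strictly less than $8$, or else fails to be isometric to $E_8(3)$. I would carry this out by computing, for representatives of the orbits of $\OG(L_{\QQQ})$ on $\mathcal M$, the set of minimal vectors at inner product $3$ from a fixed $u\in\mathcal M$, and checking that these neighborhoods cannot support the configuration in which each root has exactly $56$ partners at inner product $3$. The action of $\OG(L_{\QQQ})$, of order $119040$, together with the explicit list of minimal vectors, reduces this to a finite computation. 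The main obstacle is precisely the bookkeeping of this search: proper root configurations (such as copies of $A_2(3)$) may well occur among the glue vectors, so one must rule out the full $8$-dimensional $E_8$ pattern rather than merely its small subconfigurations, and organizing the enumeration so that the absence of any $E_8(3)$ is \emph{certified}, rather than merely not found, is where the real work lies.
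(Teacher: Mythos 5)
Your proposal is correct and follows essentially the same route as the paper: the corollary is deduced from Proposition~\ref{prop:EEE8}, which the paper proves by enumerating the $2611200$ vectors of square-norm $6$ in $L_{\QQQ}$ and running a backtrack search, seeded by $\Gamma_{\QQQ}$-orbit representatives, for an intersection-form-preserving embedding of minimal vectors of $E_8(3)$. The only cosmetic differences are that the paper searches for the $8$ basis vectors $\varepsilon_1,\dots,\varepsilon_8$ of $E_8(3)$ rather than the full $240$-root pattern (a strictly smaller and cleaner search), and that your $3$-adic unimodularity remark carries no actual constraint and plays no role in the argument.
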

In~\cite{N98}, Nebe discovered an extremal lattice 
$$
N_{64}:=L_{8, 2}\otimes L_{32, 2}
$$
of rank $64$, and showed that $\OG(N_{64})$ contains a subgroup
of order $587520$ generated by $6$ elements. (See the website~\cite{NS}.)
Since  $|\OG(L_{\QQQ})|<587520$,
we obtain the following:
\begin{corollary}\label{cor:notisom}
The lattices  $L_{\QQQ}$ and $N_{64}$ are not isomorphic.
\end{corollary}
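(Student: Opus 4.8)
The plan is to reduce the claim to a comparison of the orders of the two automorphism groups, using the elementary fact that an isomorphism of lattices induces an isomorphism of their automorphism groups. First I would argue by contradiction: suppose $L_{\QQQ}$ and $N_{64}$ were isomorphic. Then any isometry between them conjugates $\OG(L_{\QQQ})$ onto $\OG(N_{64})$, so these two groups are isomorphic as abstract groups; in particular their orders coincide. By Theorem~\ref{thm:main} we know $|\OG(L_{\QQQ})| = 119040$ exactly, so an isomorphism would force $|\OG(N_{64})| = 119040$ as well.

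Next I would invoke Nebe's result from~\cite{N98} (see also the website~\cite{NS}), which exhibits inside $\OG(N_{64})$ an explicit subgroup of order $587520$ generated by six elements. By Lagrange's theorem this subgroup order must divide $|\OG(N_{64})|$, and hence $|\OG(N_{64})| \ge 587520$. Combining this lower bound with the equality forced above gives $587520 \le |\OG(N_{64})| = 119040$, which is absurd since $119040 < 587520$. This contradiction shows that no isometry $L_{\QQQ} \to N_{64}$ can exist, which is the assertion of the corollary.

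I do not anticipate any genuine obstacle here: the corollary is a formal consequence of the order computation already carried out in Theorem~\ref{thm:main} together with the quoted lower bound on $|\OG(N_{64})|$. The one point worth stressing in the write-up is that we need only a \emph{lower} bound for $|\OG(N_{64})|$ rather than its precise value, and that this lower bound is furnished purely by the existence of a subgroup of known order via Lagrange's theorem; the full structure or exact order of $\OG(N_{64})$ is irrelevant to the argument.
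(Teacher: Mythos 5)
Your proof is correct and follows exactly the paper's route: the paper likewise notes that $\OG(N_{64})$ contains a subgroup of order $587520$ (by Nebe~\cite{N98}), while $|\OG(L_{\QQQ})|=119040<587520$, so the automorphism groups cannot be isomorphic and hence neither can the lattices. Your explicit remarks about conjugation by an isometry and Lagrange's theorem merely spell out the details the paper leaves implicit.
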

In Harada, Kitazume and Ozeki~\cite{HKO2002} and Harada and Miezaki~\cite{HM2014},
they also constructed several extremal lattices of rank $64$.
The relation of these lattices with our lattice has not yet been clarified.
\par
We found the lattice $L_{\QQQ}$ by an experimental search.
We hope that several more extremal lattices
can be obtained by the same method.
\par
\medskip
This paper is organized as follows.
In Section~\ref{subsec:code},
we fix notions and notation
about codes with components in a finite abelian group.
In Section~\ref{subsec:overlattice},
we explain how to construct an even unimodular  lattice
from a code with components in the discriminant group  $\discg_R$
of an even lattice $R$.
In Section~\ref{sec:GQRC}, 
we give the definition  of a generalized quadratic residue code,
and investigate its automorphisms.
In Section~\ref{sec:64},
we construct the lattice $L_{\QQQ}$, and 
prove that $L_{\QQQ}$ is extremal and that
$\OG(L_{\QQQ})$ contains a subgroup $\Gamma_{\QQQ}$ of order $59520$
that fits in the exact sequence~\eqref{eq:Gammaexact}.
In particular, a brute-force  method of 
the proof of $\min(L_{\QQQ})=6$
is explained in  detail.
In Section~\ref{sec:short},
we calculate the set $\SSS$ of  vectors of square-norm $6$ in $L_{\QQQ}$.
Using this set, we prove Proposition~\ref{prop:EEE8},
and  calculate the order of $\OG(L_{\QQQ})$.
In the last section,
we give another construction of  $L_{\QQQ}$.
%
\par
The computational data obtained in this article is available from the author's website~\cite{compdata}.
In particular, the Gram matrix of $L_{\QQQ}$  is found in~\cite{compdata}.
A generating set of $\OG(L_{\QQQ})$ is available from  in~\cite{NS},
though it is not minimal.
For the computation,
we used {\tt GAP}~\cite{GAP}. 
\par
\medskip
Thanks are due to Professor Masaaki Harada for informing us of the 
extremal lattices of rank $64$ in~\cite{HKO2002} and~\cite{HM2014}.
We also thank Professor Masaaki Kitazume and 
Professor Gabriele Nebe for the comments.
\par
\medskip
{\bf Conventions.}
The action of a group on a set is from the \emph{right},
unless otherwise stated.
%
%
\section{Preliminaries}\label{sec:pre}
\subsection{Codes over a finite abelian group}\label{subsec:code}
\begin{definition}
Let $A$ be a finite abelian group.
A \emph{code of length $m$ over $A$} is a subgroup of $A^m$.
\end{definition}
Let $G$ be a group.
Then the symmetric group $\SSSS_m$ acts on $G^m$ by permutations of components.
We denote by $G\wr \SSSS_m$ the wreath product 
$G^{m} \rtimes \SSSS_m$.
Then we have a splitting exact sequence
\begin{equation}\label{eq:wrexact}
1\;\to\; G^m \;\to\; G\wr \SSSS_m \;\to\; \SSSS_m \;\to\; 1.
\end{equation}
Suppose that $G$ acts on a set $X$. 
Since $\SSSS_m$ acts on $X^m$ by permutations of components
and $G^m$ acts on $X^m$ by 
$$
(x_1, \dots, x_m)^{(g_1, \dots, g_m)}=(x_1^{g_1}, \dots, x_m^{g_m}),\quad \textrm{where}\quad x_i\in X,\;\;\; g_i\in G, 
$$
the group $G\wr \SSSS_m$ acts on $X^m$ in a natural way.
\par
Let $H$ be a subgroup of the automorphism group $\Aut(A)$ of a finite abelian group $A$.
Then $H\wr \SSSS_m$ acts on  $A^m$.
For a code $\CCC$  of length $m$ over $A$,
we put
$$
\Aut_{H}(\CCC):=\set{g\in H\wr \SSSS_m}{\CCC^g=\CCC}.
$$
\subsection{Discriminant forms and overlattices}\label{subsec:overlattice}
Let $R$ be an even lattice.
We define the \emph{dual lattice} of $R$ by 
$$
R\dual:=\set{x\in R\tensor \Q}{\intf{x, v}\in \Z\;\;\textrm{for all}\;\; v\in R},
$$
and the \emph{discriminant group} $\discg_R$ of $R$  by
$$
\discg_R:=R\dual/R.
$$
Note that $R\dual$ has a natural $\Q$-valued symmetric bilinear form
that extends the $\Z$-valued symmetric bilinear form of $R$.
Hence $\discg_R$ is naturally equipped with a  quadratic form 
$$
\discf_R\colon \discg_R\to \Q/2\Z
$$
defined by $\discf_R(x \bmod R):=x^2 \bmod 2\Z$.
We call $\discf_R$  the \emph{discriminant form} of $R$.
We denote by $\OG(\discf_R)$ the automorphism group 
of the finite quadratic form $(\discg_R, \discf_R)$.
Then we have a natural homomorphism
$$
\eta_R\colon \OG(R)\to \OG(\discf_R).
$$
\begin{remark}
The notion of discriminant forms  was introduced by Nikulin~\cite{Nik}
for the study of $K3$ surfaces,
and it has been widely used in
the investigation of $K3$ surfaces and Enriques surfaces. (See, for example,~\cite{Shi}.)
\end{remark}
The discriminant form of the orthogonal direct-sum $R^m$ of $m$ copies of $R$
is the orthogonal direct-sum $(\discg_R^m, \discf_R^m)$ of $m$ copies of $(\discg_R, \discf_R)$.
Let $\CCC$ be a code of length $m$ over $\discg_R$
that is totally isotropic with respect to the quadratic form 
$$
\discf_R^m\colon \discg_R^m\to \Q/2\Z.
$$
Then the pull-back 
\begin{equation}\label{eq:LCCC}
L_{\CCC}:=\pr\inv (\CCC)
\end{equation}
of $\CCC$ by the natural projection $\pr\colon R\sp{\vee m}\to \discg_R^m$
with the restriction of the natural $\Q$-valued symmetric bilinear form of $R\sp{\vee m}$
is an even lattice that contains $R^m$ as a sublattice of finite index;
that is, $L_{\CCC}$ is an even \emph{overlattice} of $R^m$.
Moreover, since the index of $R^m$ in $L_{\CCC}$ is equal to $|\CCC|$,
if $\CCC$ satisfies 
$$
|\CCC|^2=|\discg_R|^m,
$$ 
then $L_{\CCC}$ is unimodular.
\par
Let $\CCC$ be a code of length $m$ over $\discg_R$
totally isotropic with respect to $\discf_R^m$.
We put
$$
H(R):=\Image(\eta_R\colon \OG(R)\to \OG(\discf_R))\;\;\subset \;\; \Aut(\discg_R),
$$
and consider the  group $\Aut_{H(R)}(\CCC)$.
Each element $g$ of $\Aut_{H(R)}(\CCC)$ 
is uniquely written as
$$
g=\sigma \cdot (h_1, \dots, h_m)\qquad (\;\sigma\in \SSSS_m, \;h_i\in H(R)\;).
$$
By the definition of $H(R)$,
there exist elements $\tilde{h}_i\in \OG(R)$
such that $\eta_R(\tilde{h}_i)=h_i$
for $i=1, \dots, m$.
Since $g$ preserves the code $\CCC$,
the action of
$$
\tilde{g}:=\sigma \cdot  (\tilde{h}_1, \dots, \tilde{h}_m)\;\;\in\;\;  \OG(R)\wr \SSSS_m
$$
on $(R\dual)^m$ preserves the submodule $L_{\CCC}\subset (R\dual)^m$,
and hence we obtain a lift  $\tilde{g}\in \OG(L_{\CCC})$
of $g$.
If $\eta_R$ is injective,
then the lift $\tilde{g}$ of $g$ is unique.
Therefore  we have the following:
\begin{lemma}\label{lem:AutcodetoOL}
Let $\CCC$ and  $L_{\CCC}$ be as above.
If the natural homomorphism $\eta_R$ is injective,
then 
we have an injective  homomorphism
$\Aut_{H(R)}(\CCC)\inj \OG(L_{\CCC})$.
\end{lemma}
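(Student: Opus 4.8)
The plan is to construct the homomorphism $\Aut_{H(R)}(\CCC)\to\OG(L_{\CCC})$ explicitly by lifting each code automorphism to a lattice automorphism, exactly as the discussion preceding the statement indicates, and then to verify two things: that the construction is well-defined (independent of the choices made) and that it is injective. First I would take an arbitrary element $g\in\Aut_{H(R)}(\CCC)$ and write it uniquely as $g=\sigma\cdot(h_1,\dots,h_m)$ with $\sigma\in\SSSS_m$ and $h_i\in H(R)$. Since $H(R)=\Image(\eta_R)$, each $h_i$ admits at least one preimage $\tilde h_i\in\OG(R)$ with $\eta_R(\tilde h_i)=h_i$; I would fix such preimages and form $\tilde g=\sigma\cdot(\tilde h_1,\dots,\tilde h_m)\in\OG(R)\wr\SSSS_m$, which acts naturally on $(R\dual)^m$ preserving both the lattice $R^m$ and the $\Q$-valued form. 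The key point is that $\tilde g$ preserves the overlattice $L_{\CCC}=\pr^{-1}(\CCC)$: because $\pr$ is $\OG(R)\wr\SSSS_m$-equivariant with respect to the induced action on $\discg_R^m$, and because $\eta_R(\tilde h_i)=h_i$ so that the action of $\tilde g$ on $\discg_R^m$ is exactly $g$, which preserves $\CCC$ by hypothesis. Hence $\tilde g(L_{\CCC})=\pr^{-1}(g(\CCC))=\pr^{-1}(\CCC)=L_{\CCC}$, giving $\tilde g\in\OG(L_{\CCC})$.

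Next I would address well-definedness under the injectivity hypothesis on $\eta_R$. The only freedom in the construction lies in the choice of the preimages $\tilde h_i$, and since $\eta_R$ is injective each fiber $\eta_R^{-1}(h_i)$ is a singleton, so the $\tilde h_i$ are uniquely determined and $\tilde g$ is canonical. This simultaneously shows the map $g\mapsto\tilde g$ is a genuine function and that it is a group homomorphism: given $g,g'$ with canonical lifts $\tilde g,\tilde g'$, the product $\tilde g\,\tilde g'\in\OG(R)\wr\SSSS_m$ reduces under $\eta_R$ componentwise to $g g'$ (using that $\eta_R$ is a homomorphism and that the wreath-product multiplication is compatible with the semidirect structure), so by uniqueness of lifts $\tilde g\,\tilde g'$ is the canonical lift of $g g'$, i.e. $\widetilde{g g'}=\tilde g\,\tilde g'$.

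Finally, injectivity of $\Aut_{H(R)}(\CCC)\to\OG(L_{\CCC})$ follows because the composite $\OG(L_{\CCC})\supset\Image\to\SSSS_m$ recovers $\sigma$ and the residual action on $\discg_R^m$ recovers $g$; more directly, if $\tilde g$ acts trivially on $L_{\CCC}$ then it acts trivially on $R^m\subset L_{\CCC}$, forcing $\sigma=\mathrm{id}$ and each $\tilde h_i=\mathrm{id}$ in $\OG(R)$, whence $h_i=\eta_R(\tilde h_i)=\mathrm{id}$ and $g=\mathrm{id}$. I expect the main obstacle to be purely notational rather than conceptual: one must be careful that the action of the wreath product $\OG(R)\wr\SSSS_m$ on $(R\dual)^m$ is compatible, under the equivariant projection $\pr$, with the induced action of $H(R)\wr\SSSS_m$ on $\discg_R^m$, so that the reduction of $\tilde g$ genuinely equals the original $g$ and not some twist by the permutation. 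Once that equivariance is stated cleanly, every remaining step is a direct verification.
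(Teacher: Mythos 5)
Your proposal is correct and follows essentially the same route as the paper: the paper's proof is precisely the lifting construction in the discussion preceding the lemma, namely writing $g=\sigma\cdot(h_1,\dots,h_m)$, lifting each $h_i$ through $\eta_R$ to get $\tilde{g}\in \OG(R)\wr\SSSS_m$ preserving $L_{\CCC}$, with injectivity of $\eta_R$ guaranteeing the lift is unique. Your added verifications (equivariance of $\pr$, the homomorphism property of $g\mapsto\tilde{g}$, and injectivity via triviality of the action on $R^m\subset L_{\CCC}$) are exactly the routine details the paper leaves implicit, and they all check out.
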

\section{Generalized quadratic residue codes}\label{sec:GQRC}
\subsection{Definition}\label{subsec:defGQRC}
Let $A$ be a finite abelian group, and 
$p$ an odd prime.
We consider the set of rational points 
$$
\P^1(\F_p)=\F_p\cup\{\infty\}=\{0,1,\dots, p-1, \infty\}
$$
of the projective line over $\F_p$,
and let $A^{p+1}$ denote the abelian group  of all mappings 
$$
\vv\colon \P^1(\F_p)\to A
$$
from $\P^1(\F_p)$ of $A$.
Let $\chi_p\colon \F_p\sptimes\to  \{\pm 1\}$
denote the Legendre character of the multiplicative group $\F_{p}\sptimes:=\F_p\setminus\{0\}$.
\begin{definition}
Let $a,b,d,s,t,e$ be elements of $A$.
A  \emph{generalized quadratic residue code} of length $p+1$ over $A$ with parameter $(a,b,d,s,t, e)$
is the subgroup  $\QQQ$ of $A^{p+1}$ generated by the  elements
$\vv_{\infty}$, $\vv_{0}, \vv_{1}, \dots,  \vv_{p-1} \in A^{p+1}$ defined as follows:
$$
\vv_{\infty}(\nu)=\begin{cases}
a & \textrm{if $\nu \in \F_p$}, \\
b & \textrm{if $\nu=\infty$}, 
\end{cases}
$$
and, for $\mu\in \F_p$, 
$$
\vv_{\mu}(\nu)=\begin{cases}
d & \textrm{if $\nu=\mu$}, \\
s & \textrm{if $\nu\in \F_p\setminus \{\mu\}$ and  $\chi_p(\mu-\nu)=1$}, \\
t & \textrm{if $\nu\in \F_p\setminus \{\mu\}$ and  $\chi_p(\mu-\nu)=-1$}, \\
e & \textrm{if $\nu=\infty$}.
\end{cases}
$$
\end{definition}
\subsection{Automorphisms of a generalized quadratic residue code}\label{subsec:autGQRC}
Let $A$ and $p$ be as above.
For simplicity, we put  
$$
\SSSS:=\SSSS(\P^1(\F_p))\cong \SSSS_{p+1}.
$$
The linear fractional transformation embeds $\PSL_2 (p)$
into $\SSSS$.
Let $\alpha$ be a generator of $\F_p\sptimes$.
Then $\PSL_2(p)$ is generated by the three elements
$$ 
\left[\begin{array}{cc} 0 & 1 \\ -1 & 0\end{array}\right],
\quad
\left[\begin{array}{cc} 1 & 1 \\ 0 & 1\end{array}\right],
\quad
\left[\begin{array}{cc} \alpha& 0 \\  0& \alpha\inv\end{array}\right],
$$
which correspond respectively  to the permutations of $\P^1(\F_p)$ defined as follows:
$$
\xi \colon \nu\mapsto -1/\nu, 
\quad
\eta \colon  \nu\mapsto \nu+1,
\quad
\zeta \colon \nu\mapsto \alpha^2 \nu,
$$
with the understanding that $-1/0=\infty, -1/\infty=0, \infty+1=\infty$, and $\alpha^2\infty=\infty$.
%
Let $\QQQ\subset A^{p+1}$  be a generalized quadratic residue code of length $p+1$ over $A$,
and $H$ a subgroup of $\Aut(A)$.
Let $f_{\QQQ}$ be the composite homomorphism
of  the natural inclusion $\Aut_{H}(\QQQ) \inj H\wr \SSSS$ 
and the surjection $H\wr \SSSS\surj \SSSS$ in~\eqref{eq:wrexact}:
$$
f_{\QQQ} \colon \Aut_{H}(\QQQ) \inj H\wr \SSSS\surj \SSSS
$$
\begin{lemma}\label{lem:etazeta}
The image of $f_{\QQQ}$ contains $\eta$ and $\zeta$.
\end{lemma}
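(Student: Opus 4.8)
The plan is to show that the two permutations $\eta$ and $\zeta$, regarded as elements of $\SSSS$ with trivial $H$-component, i.e.\ as $\eta\cdot(\mathrm{id},\dots,\mathrm{id})$ and $\zeta\cdot(\mathrm{id},\dots,\mathrm{id})$ in $H\wr\SSSS$, already preserve the code $\QQQ$. Once this is established, these elements lie in $\Aut_{H}(\QQQ)$ and are carried by $f_{\QQQ}$ to $\eta$ and $\zeta$, which proves the lemma. Since $\QQQ$ is generated by $\vv_\infty,\vv_0,\dots,\vv_{p-1}$, it suffices to check that each of $\eta,\zeta$ permutes this finite generating set.

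First I would analyse $\eta\colon\nu\mapsto\nu+1$. The defining recipe for a generator $\vv_\mu$ sorts the value at $\nu$ according to whether $\nu=\mu$, whether $\nu=\infty$, or the sign $\chi_p(\mu-\nu)$. The key point is that a translation preserves differences, $(\mu+1)-(\nu+1)=\mu-\nu$, so $\chi_p(\mu-\nu)$ is unaffected; together with $\infty+1=\infty$, a direct comparison of the two recipes gives $\vv_\mu^{\,\eta}=\vv_{\mu+1}$ and $\vv_\infty^{\,\eta}=\vv_\infty$ (the precise form of the index shift depends on the action convention, but is immaterial here). Thus $\eta$ permutes the generators while fixing $\vv_\infty$, and hence preserves $\QQQ$.

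Next I would treat $\zeta\colon\nu\mapsto\alpha^2\nu$ in exactly the same way. Here $\zeta$ scales differences by $\alpha^2$, and since $\alpha^2$ is a square we have $\chi_p(\alpha^2(\mu-\nu))=\chi_p(\alpha^2)\,\chi_p(\mu-\nu)=\chi_p(\mu-\nu)$, so again the quadratic-residue data is preserved; with $\alpha^2\infty=\infty$ the same comparison yields $\vv_\mu^{\,\zeta}=\vv_{\alpha^2\mu}$ and $\vv_\infty^{\,\zeta}=\vv_\infty$. Hence $\zeta$ also permutes the generating set and preserves $\QQQ$.

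I do not expect a genuine obstacle here: the whole argument rests on the elementary fact that translations and multiplication by a square preserve the Legendre character of differences in $\F_p$. The only care needed is bookkeeping at the two distinguished positions $\mu$ and $\infty$, to confirm that the distinguished entries $d$ and $e$ move to the correct new slots under the conventions $\infty+1=\infty$ and $\alpha^2\infty=\infty$. It is worth noting that this is precisely why the lemma covers $\eta$ and $\zeta$ but not $\xi\colon\nu\mapsto -1/\nu$: inversion does not preserve $\chi_p$ of differences, so lifting it to an automorphism of $\QQQ$ generally requires a nontrivial twist by an element of $H$, which must be handled separately.
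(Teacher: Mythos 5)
Your proof is correct and is essentially the paper's own argument: the paper's one-line proof likewise observes that $\eta$ and $\zeta$ permute the generating set $\{\vv_{\infty}, \vv_{0}, \dots, \vv_{p-1}\}$, and your verification via invariance of $\chi_p$ of differences under translation and square scaling is exactly the computation implicit there. Your closing remark about $\xi$ requiring a nontrivial twist by $H$ also matches how the paper handles $\xi$ separately in Proposition~\ref{prop:eqMalgo}.
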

\begin{proof}
The permutation of components given by $\eta$ (resp.~by $\zeta$)
preserves the generating set $\{\vv_{\infty}, \vv_{0}, \dots, \vv_{p-1}\}$ of $\QQQ$.
\end{proof}
%
%
\section{An extremal lattice $L_{\QQQ}$  of rank  $64$}\label{sec:64}
We construct an extremal lattice $L_{\QQQ}$ of rank $64$.
Let $R$ be the lattice of rank $2$ with  a basis $e_1, e_2$
such that the Gram matrix of $R$ with respect to $e_1, e_2$ is
\begin{equation}\label{eq:GramR}
\left[\begin{array}{cc} \intf{e_1, e_1} & \intf{e_1, e_2} \\ \intf{e_2, e_1} & \intf{e_2, e_2}\end{array}\right]
=\left[\begin{array}{cc} 6 & 1 \\ 1 & 6 \end{array}\right].
\end{equation}
Let $e_1\dual, e_2\dual$  be the basis of  $R\dual$
dual to $e_1, e_2$.
Then $\discg_R$ is a cyclic group of order $35$ generated by
$$
u:=6 e_1\dual+2 e_2\dual=\frac{1}{35}(34 e_1+ 6 e_2).
$$
For simplicity, we denote by $n\in \Z/35\Z$ the element 
$$
n\,(6 e_1\dual+2 e_2\dual)\in \discg_R.
$$
Then the discriminant form $\discf_R\colon \discg_R\to \Q/2\Z$ is given by
$$
\discf_R(n)=6 n^2/35 \bmod 2\Z.
$$
We have
$$
\OG(\discf_R)=\set{k\in (\Z/35\Z)\sptimes}{6 k^2\equiv 6 \bmod 70}=\{\pm 1, \pm 6\}.
$$
On the other hand, the group $\OG(R)$ is of order $4$ and is 
generated by
$$
g_1:=\left[\begin{array}{cc} 0 & 1 \\ 1 & 0 \end{array}\right],
\quad
g_2:=\left[\begin{array}{cc} -1 & 0 \\ 0 & -1 \end{array}\right].
$$
The natural homomorphism $\eta_R\colon \OG(R)\to \OG(\discf_R)$
maps $g_1$ to $-6$ and $g_2$ to $-1$.
Hence $\eta_R$ is an isomorphism.
In particular, the image $H(R)$ of $\eta_R$ is isomorphic to $\Z/2\Z\times \Z/2\Z$.
\par
\medskip
We investigate 
the generalized quadratic residue code $\QQQ$ of length $32$ over $\discg_R$
with parameter 
$$
(a,b,d,s,t,e)=(0, 0, 1, 7, 3, 2).
$$
Note that  $\F_{31}\sptimes$ is generated by $3$.
We arrange the elements of $\P^1(\F_{31})$ as
\begin{equation}\label{eq:sortP1}
[\, \infty, 0 \mid 1, 3^2, 3^4, \dots, 3^{28}, \mid 3, 3^3,3^5,  \dots, 3^{29}\,],
\end{equation}
and write elements of $\discg_R^{32}$, $H(R)^{32}$,  and  $(R\tensor\Q)^{32}$
as row vectors according this arrangement.
\par
\begin{proposition}
The code $\QQQ$ is totally isotropic with respect to $\discf_R^{32}$,
and satisfies $|\QQQ|=35^{16}$.
\end{proposition}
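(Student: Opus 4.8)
The plan is to verify the two assertions---total isotropy and the cardinality---separately, reducing each to a short computation on the generators. Since $a=b=0$, the vector $\vv_\infty$ is zero, so $\QQQ$ is generated by $\vv_0,\dots,\vv_{p-1}$ with $p=31$. Recall that $\discf_R(n)\equiv 6n^2/35\pmod{2\Z}$ and that the induced $\Q/\Z$-valued bilinear form on $\discg_R\cong\Z/35\Z$ sends $(m,n)$ to $6mn/35\bmod\Z$; coordinatewise these give the quadratic form $\discf_R^{32}$ and its bilinear form on $\discg_R^{32}$. A subgroup is totally isotropic for $\discf_R^{32}$ if and only if $\discf_R^{32}$ vanishes on each generator and the associated bilinear form vanishes on each pair of generators, so it suffices to check these two conditions on the $\vv_\mu$.

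For the quadratic condition, every $\vv_\mu$ with $\mu\in\F_p$ takes the value $d=1$ once, the value $s=7$ on the $(p-1)/2=15$ coordinates with $\chi_p(\mu-\nu)=1$, the value $t=3$ on the $15$ coordinates with $\chi_p(\mu-\nu)=-1$, and the value $e=2$ at $\nu=\infty$. Hence $\discf_R^{32}(\vv_\mu)=6(1^2+15\cdot7^2+15\cdot3^2+2^2)/35=6\cdot875/35=150\equiv0\pmod{2\Z}$, uniformly in $\mu$. For the bilinear condition, the standard bilinear form on $\discg_R^{32}$ is invariant under coordinate permutations, and by Lemma~\ref{lem:etazeta} the translation $\eta$ carries $\vv_\mu$ to $\vv_{\mu+1}$ and fixes $\vv_\infty$; so the pairing of $\vv_\mu$ and $\vv_{\mu'}$ equals that of $\vv_0$ and $\vv_{\mu'-\mu}$, and I reduce to computing $\sum_\nu\vv_0(\nu)\vv_{\mu'}(\nu)\bmod 35$ for $\mu'\ne0$.

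This overlap sum I would split as the contribution $e^2=4$ of $\nu=\infty$, the contribution of the two coordinates $\nu\in\{0,\mu'\}$, and the contribution of the remaining $\nu$. At $\nu\in\{0,\mu'\}$ the two products are $d\cdot s$ and $d\cdot t$ in one order or the other (using $\chi_p(-1)=-1$), so they contribute $7+3=10$ regardless of $\chi_p(\mu')$. For the remaining $\nu$, the four products $49,21,21,9$ occur with multiplicities $N_{++},N_{+-},N_{-+},N_{--}$ determined by the joint distribution of $\big(\chi_p(-\nu),\chi_p(\mu'-\nu)\big)$; these follow from $\sum_{\nu\ne0}\chi_p(\nu)=0$ together with the evaluation $\sum_\nu\chi_p(-\nu)\chi_p(\mu'-\nu)=\sum_\nu\chi_p(\nu^2-\mu'\nu)=-1$, valid for $\mu'\ne0$, giving $N_{++}=N_{--}=7$ and $N_{+-}+N_{-+}=15$. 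Thus the remaining contribution is $49\cdot7+21\cdot15+9\cdot7=721$, and the total is $4+10+721=735=21\cdot35\equiv0\pmod{35}$, independently of the two cases. This settles total isotropy.

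For the cardinality, since $35$ is squarefree the Chinese remainder theorem gives $|\QQQ|=5^{r_5}7^{r_7}$, where $r_\ell$ is the rank over $\F_\ell$ of the $31\times32$ matrix whose rows are the $\vv_\mu$. Total isotropy in the nondegenerate finite quadratic group $(\discg_R^{32},\discf_R^{32})$ forces $|\QQQ|^2\le|\discg_R|^{32}=35^{32}$, hence $r_\ell\le16$ for $\ell\in\{5,7\}$. The main obstacle is the matching lower bound $r_5,r_7\ge16$; I expect to obtain it either by a direct rank computation exhibiting a nonsingular $16\times16$ minor over each $\F_\ell$ (the kind of finite check performed by machine), or, more conceptually, by identifying $\QQQ_\ell$ with the self-dual extended quadratic residue code of length $32$ over $\F_\ell$---which is available precisely because $5$ and $7$ are both squares modulo $31$ and $31\equiv3\pmod4$, forcing that code to be self-dual of dimension $(p+1)/2=16$. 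Either way $r_5=r_7=16$ and $|\QQQ|=35^{16}$. The delicate feature underlying everything is that the parameter $(0,0,1,7,3,2)$ is engineered so that the weighted square-sum $875$ is divisible by $35$ with even quotient and the overlap sum $735$ is divisible by $35$; once this is checked, the rest is the rank bookkeeping above.
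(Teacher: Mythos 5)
Your isotropy argument is correct, and it takes a genuinely different route from the paper. The paper proves both claims at once by a single finite computation: it row-reduces the generating set over $\Z/35\Z$ to the standard form $[\,I_{16}\,|\,B\,]$ (Table~\ref{table:C64}), so the identity block gives $|\QQQ|=35^{16}$ immediately, and isotropy is then checked on those $16$ explicit generators. You instead verify isotropy conceptually on the original generators: $\vv_\infty=0$ since $a=b=0$; the quadratic value $6\cdot 875/35=150\equiv 0 \pmod{2\Z}$ is uniform in $\mu$; and the pairings reduce, by translation invariance, to the single character-sum evaluation $\sum_\nu\chi_p(\nu^2-\mu'\nu)=-1$, giving $N_{++}=N_{--}=7$, $N_{+-}+N_{-+}=15$, and the total $4+10+721=735\equiv 0\pmod{35}$. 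I checked these numbers and they are right; your criterion (vanishing of the quadratic form on generators plus vanishing of the $\Q/\Z$-valued pairing on pairs of generators) does imply total isotropy of the generated subgroup. This buys a computation-free, parameter-transparent proof of isotropy, which the paper's verification does not provide. One slip in your closing summary: the quotient $875/35=25$ is odd, not even; what makes the quadratic value vanish in $\Q/2\Z$ is that $6\cdot 25=150$ is even, which your actual computation uses correctly.

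The cardinality half has two soft spots. First, the inference from $|\QQQ|^2\le 35^{32}$ to $r_5\le 16$ and $r_7\le 16$ is a non sequitur as written: $5^{2\cdot 17}\,7^{2\cdot 10}\le 5^{32}7^{32}$ holds, so the global inequality does not bound each exponent. You must run the isotropy argument prime by prime: the $\ell$-primary part of $\QQQ$ is isotropic for the nondegenerate $\ell$-primary part of the discriminant bilinear form, whence $r_\ell\le 16$ for each $\ell\in\{5,7\}$ separately. This is an easy repair. Second, and more seriously, the lower bound $r_5,r_7\ge 16$ --- which you yourself flag as the main obstacle --- is not proved. Your route (a), a direct rank computation over $\F_5$ and $\F_7$, is a legitimate finite certificate and is essentially what the paper does by exhibiting $[\,I_{16}\,|\,B\,]$; but you have not carried it out. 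Route (b) is not a proof as stated: even granting that $5$ and $7$ are squares modulo $31$ and that $31\equiv 3\pmod 4$, the identification of $\QQQ\bmod \ell$ with a (suitably extended, self-dual) quadratic residue code depends on the specific values of $(d,s,t,e)\bmod\ell$ --- for degenerate parameters the span can be strictly smaller --- so it requires checking nonvanishing of $d+s\theta+t\theta'$ at the Gauss-sum eigenvalues of the underlying circulant, which you do not supply. Since the statement is at bottom a finite verification, your proposal is correct in outline; to complete it you need either the explicit rank check or that circulant-eigenvalue argument for the lower bound.
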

\begin{proof}
The  code $\QQQ$ 
is generated by
the row vectors of the matrix
$\left[ I_{16}| B\right]$, where $I_{16}$ is the identity matrix of size $16$,
and $B$ is the $16\times 16$ matrix 
in Table~\ref{table:C64}.
(The components of $B$ are in $\discg_R=\Z/35\Z$.)
It is easy to confirm that $\QQQ$ is totally isotropic 
with respect to $\discf_R^{32}$, and that $|\QQQ|=35^{16}$ holds.
\begin{table}
$$
\left[\begin{array}{cccccccccccccccc} 
32 & 30 & 15 & 11 & 7 & 29 & 19 & 10 & 26 & 11 & 31 & 33 & 28 & 22 & 22 & 12 \\ 
16 & 13 & 23 & 21 & 19 & 30 & 25 & 3 & 11 & 21 & 31 & 32 & 12 & 9 & 9 & 4 \\ 
34 & 6 & 30 & 22 & 22 & 19 & 20 & 32 & 17 & 30 & 30 & 24 & 10 & 33 & 0 & 20 \\ 
34 & 26 & 1 & 17 & 9 & 6 & 4 & 17 & 14 & 12 & 25 & 19 & 34 & 8 & 33 & 20 \\ 
34 & 26 & 21 & 23 & 4 & 28 & 26 & 1 & 34 & 9 & 7 & 14 & 29 & 32 & 8 & 18 \\ 
34 & 24 & 21 & 8 & 10 & 23 & 13 & 23 & 18 & 29 & 4 & 31 & 24 & 27 & 32 & 28 \\ 
34 & 34 & 19 & 8 & 30 & 29 & 8 & 10 & 5 & 13 & 24 & 28 & 6 & 22 & 27 & 17 \\ 
34 & 23 & 29 & 6 & 30 & 14 & 14 & 5 & 27 & 0 & 8 & 13 & 3 & 4 & 22 & 12 \\ 
34 & 18 & 18 & 16 & 28 & 14 & 34 & 11 & 22 & 22 & 30 & 32 & 23 & 1 & 4 & 7 \\ 
34 & 13 & 13 & 5 & 3 & 12 & 34 & 31 & 28 & 17 & 17 & 19 & 7 & 21 & 1 & 24 \\ 
34 & 30 & 8 & 0 & 27 & 22 & 32 & 31 & 13 & 23 & 12 & 6 & 29 & 5 & 21 & 21 \\ 
34 & 27 & 25 & 30 & 22 & 11 & 7 & 29 & 13 & 8 & 18 & 1 & 16 & 27 & 5 & 6 \\ 
34 & 12 & 22 & 12 & 17 & 6 & 31 & 4 & 11 & 8 & 3 & 7 & 11 & 14 & 27 & 25 \\ 
34 & 31 & 7 & 9 & 34 & 1 & 26 & 28 & 21 & 6 & 3 & 27 & 17 & 9 & 14 & 12 \\ 
34 & 18 & 26 & 29 & 31 & 18 & 21 & 23 & 10 & 16 & 1 & 27 & 2 & 15 & 9 & 34 \\ 
34 & 5 & 13 & 13 & 16 & 15 & 3 & 18 & 5 & 5 & 11 & 25 & 2 & 0 & 15 & 29 
\end{array}\right] 
$$
\caption{The matrix $B$}\label{table:C64}
\end{table}
\end{proof}
Hence we obtain an even unimodular overlattice $L_{\QQQ}=\pr\inv(\QQQ)$ of $R^{32}$
by~\eqref{eq:LCCC}.
We will show that  $L_{\QQQ}$ is extremal, 
and that $\OG(L_{\QQQ})$ contains 
a subgroup $\Gamma_{\QQQ}$ with  the properties stated in Theorem~\ref{thm:main}.
\begin{proposition}\label{prop:eqMalgo}
The kernel of the homomorphism
$f_{\QQQ}\colon \Aut_{H(R)} (\QQQ)\to \SSSS$ 
is equal to the image of the diagonal homomorphism
$\delta\colon H(R)\inj H(R)^{32}$.
The image of $f_{\QQQ}$ contains the permutation $\xi\in \SSSS$.
\end{proposition}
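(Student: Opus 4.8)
The plan is to study the monomial action on $\QQQ$ after splitting the coefficient ring by the Chinese Remainder Theorem, $\Z/35\Z\cong\F_5\times\F_7$. This gives $\discg_R^{32}\cong\F_5^{32}\oplus\F_7^{32}$ and, by coprimality of $5$ and $7$, a canonical decomposition $\QQQ=\QQQ_5\oplus\QQQ_7$ into the images of $\QQQ$ under reduction modulo $5$ and modulo $7$, with $|\QQQ_5|=5^{16}$ and $|\QQQ_7|=7^{16}$. Under this identification $H(R)=\{\pm1,\pm6\}$ becomes $\{\pm1\}\times\{\pm1\}$ (since $-1\mapsto(-1,-1)$ and $6\mapsto(1,-1)$), acting as an independent sign on each factor; accordingly an element of $H(R)^{32}$ is a pair of sign vectors, and its action on $\F_5^{32}\oplus\F_7^{32}$ is coordinatewise scaling by $\pm1$ on each summand.

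For the inclusion $\delta(H(R))\subseteq\ker f_{\QQQ}$ I note that a diagonal tuple scales all $\F_5$-coordinates by one fixed sign and all $\F_7$-coordinates by another; since $\QQQ_5$ and $\QQQ_7$ are subgroups, hence closed under negation, each such scaling preserves $\QQQ_5\oplus\QQQ_7=\QQQ$. As these tuples lie in $H(R)^{32}$, they map to the identity of $\SSSS$, so $\delta(H(R))\subseteq\ker f_{\QQQ}$.

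For the reverse inclusion, the canonicity of the decomposition $\QQQ=\QQQ_5\oplus\QQQ_7$ shows that a tuple $(h_\nu)$, written $h_\nu=(\epsilon_\nu,\epsilon'_\nu)$ with $\epsilon_\nu,\epsilon'_\nu\in\{\pm1\}$, preserves $\QQQ$ if and only if $(\epsilon_\nu)$ preserves $\QQQ_5$ and $(\epsilon'_\nu)$ preserves $\QQQ_7$ under coordinatewise sign change. It thus suffices to prove the rigidity statement that, for $q\in\{5,7\}$, the only sign vectors in $\{\pm1\}^{32}$ whose diagonal action preserves $\QQQ_q$ are the two constant vectors $\pm(1,\dots,1)$. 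I would verify this from the generator matrix $[\,I_{16}\mid B\,]$ reduced modulo $q$: a sign vector $(\epsilon_\nu)$ preserves $\QQQ_q$ exactly when $[\,I_{16}\mid B\,]\,\mathrm{diag}(\epsilon_\nu)=M\,[\,I_{16}\mid B\,]$ for some invertible $16\times16$ matrix $M$ over $\F_q$, and solving this finite system over $\{\pm1\}$ should leave only the two constant solutions. Granting rigidity, both $(\epsilon_\nu)$ and $(\epsilon'_\nu)$ are constant, so $(h_\nu)=\delta(h)$ for some $h\in H(R)$ and $\ker f_{\QQQ}=\delta(H(R))$. I expect this rigidity to be the main obstacle, as it is precisely where the specific entries of $B$, rather than the formal features of the construction, must enter.

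To show $\xi\in\Image(f_{\QQQ})$ I would produce a sign-twisted lift of the inversion $\xi\colon\nu\mapsto-1/\nu$. The governing identity is $\chi_p(\xi\mu-\xi\nu)=\chi_p(\mu-\nu)\,\chi_p(\mu)\,\chi_p(\nu)$ for $\mu,\nu\in\F_p\sptimes$, obtained from $\xi\mu-\xi\nu=(\mu-\nu)/(\mu\nu)$. It shows that permuting the coordinates of a generator $\vv_\mu$ by $\xi$ reproduces the residue/non-residue pattern of another generator, up to a sign that at each coordinate $\nu$ depends only on $\chi_p(\nu)$ (the coordinates $\nu\in\{0,\infty\}$ being handled separately, and using that $-1$ is a non-residue modulo $31$). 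Since $H(R)$ supplies independent signs on the two CRT factors, this $\chi_p$-twist is realized by some $(h_\nu)\in H(R)^{32}$, and one checks for the parameters $(a,b,d,s,t,e)=(0,0,1,7,3,2)$ that $\xi\cdot(h_\nu)$ carries every generator $\vv_\mu$ into $\QQQ$, hence preserves $\QQQ$. Together with Lemma~\ref{lem:etazeta}, which places $\eta$ and $\zeta$ in the image, this gives $\langle\eta,\zeta,\xi\rangle=\PSL_2(31)\subseteq\Image(f_{\QQQ})$, and in particular $\xi\in\Image(f_{\QQQ})$.
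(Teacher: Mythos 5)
Your proposal is correct, and it reaches the statement by a genuinely different organization of the computation than the paper. The paper works directly modulo $35$: for any $\sigma\in\SSSS$ it characterizes the tuples $\vx\in H(R)^{32}$ with $\QQQ^{\sigma\vx}=\QQQ$ by the single matrix congruence $Z^{\sigma}\cdot\Delta(\vx)\cdot Z^{*}\equiv O \bmod 35$, where $Z$ is built from the generator matrix $[\,I_{16}\mid B\,]$, and then solves this linear system twice --- for $\sigma=\id$ it finds exactly $\delta(H(R))$, and for $\sigma=\xi$ it finds a nonempty solution set, exhibiting the explicit witness $(1,-1,-6,\dots,-6,6,\dots,6)$. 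You instead split by CRT, which is legitimate: a subgroup of $\F_5^{32}\times\F_7^{32}$ is the product of its two projections because the orders are coprime, so $\QQQ=\QQQ_5\oplus\QQQ_7$ canonically and $H(R)=\{\pm1,\pm6\}$ becomes independent signs on the two factors. This buys transparency in the kernel computation: over $\F_q$ the row-space condition forces the multiplier matrix to be $\mathrm{diag}(\epsilon_1,\dots,\epsilon_{16})$, so your system collapses to $\epsilon_i=\epsilon_{16+j}$ whenever $B_{ij}\not\equiv 0\bmod q$, and rigidity is simply connectivity of the support graph of $B$ modulo $5$ and modulo $7$ --- which does hold (the first column of $B$ consists of units modulo both primes, so it is adjacent to all sixteen row vertices, and every other column contains a unit). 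Your Gleason--Prange-type ansatz for $\xi$, based on $\chi_p(\xi\mu-\xi\nu)=\chi_p(\mu-\nu)\chi_p(\mu)\chi_p(\nu)$, is likewise vindicated: the paper's witness is exactly of your predicted shape, namely $h_\nu=-6\,\chi_{31}(\nu)$ for $\nu\in\F_{31}\sptimes$ (under CRT the sign pair $(-\chi_{31}(\nu),\,\chi_{31}(\nu))$), with $h_\infty=1$ and $h_0=-1$, so your finite search over the $\chi$-twist family would find it. What the paper's route buys in exchange is uniformity (one criterion, no case analysis at $0$, $\infty$, and no separate treatment of the two primes) and an actually exhibited solution. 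The one caveat for a complete write-up: your two decisive steps are left as ``should'' and ``one checks''; since the proposition genuinely depends on the entries of $B$ rather than on formal features of the construction --- as you correctly anticipate --- those finite verifications must actually be performed, but both succeed, and all your structural reductions are sound.
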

\begin{proof}
Let $\sigma$ be an element of $\SSSS$.
Let $Z$ be the $32\times 32$ matrix
$$
\left[
\begin{array}{c|c}
I_{16} & B \mystrutd{4pt} \\
\hline
O & 35\, I_{16}\mystruth{11pt}
\end{array}
\right],
$$
where $B$ is regarded as a matrix with components, not in $\Z/35\Z$, but in $\Z$,
and we put 
$Z^{*}:=35 Z\inv$,
which is a matrix with components in $\Z$.
Let $Z^{\sigma}$ be the matrix obtained by applying the permutation $\sigma$ of components
to the row vectors of $Z$.
For 
$$
\vx=(x_1, \dots, x_{32}) \in H(R)^{32}\subset H(R)\wr \SSSS, 
$$
let $\Delta(\vx)$ denote the diagonal matrix with components being the representatives in $\Z$ of $x_1, \dots, x_{32}\in H(R)=\{\pm 1, \pm 6\}\subset (\Z/35\Z)\sptimes$.
Then we have $\QQQ^{\sigma\vx}=\QQQ$ only when
\begin{equation}\label{eq:cong35eq}
Z^{\sigma} \cdot \Delta(\vx)\cdot Z^{*}\equiv O \; \bmod 35.
\end{equation}
We can calculate the set
$$
\Lambda(\sigma):=\set{\gamma\in H(R)^{32}}{\QQQ^{\sigma \gamma}=\QQQ}
$$
by solving the congruence linear equation~\eqref{eq:cong35eq}
with unknowns $x_1, \dots, x_{32}$.
By this method, we obtain  
$\Lambda(\id)=\delta(H(R))$, and hence $\Ker f_{\QQQ}=\delta(H(R))$.
On the other hand, we have $\Lambda(\xi)\ne \emptyset$. 
Indeed, we see that $\Lambda(\xi)$ contains the element 
$$
(\,1, -1, \, -6, \dots, -6, \, 6, \dots, 6\,) \;\in\; H(R)^{32} \quad (\textrm{$15$ times  of $-6$ and $15$ times of $6$}).
$$
Hence $\Image f_{\QQQ}$ contains $\xi$.
\end{proof}
Combining Proposition~\ref{prop:eqMalgo} with  Lemma~\ref{lem:etazeta}, we see that 
the image of $f_{\QQQ}$ includes the subgroup $\PSL_2(31)\subset \SSSS$.
We put
$$
\overline{\Gamma}_{\QQQ}:= f_{\QQQ}\inv (\PSL_2(31)).
$$
By Lemma~\ref{lem:AutcodetoOL}, 
we have a natural embedding $\overline{\Gamma}_{\QQQ}\inj \OG(L_{\QQQ})$
of the subgroup $\overline{\Gamma}_{\QQQ}$ of $\Aut_{H(R)}(\QQQ)$ into $\OG(L_{\QQQ})$.
Let $\Gamma_{\QQQ}$ be the image of this embedding.
Then   $\Gamma_{\QQQ}$ satisfies the exact sequence~\eqref{eq:Gammaexact} in Theorem~\ref{thm:main}.
In particular, $\Gamma_{\QQQ}$ is of order $59520$.
\begin{proposition}\label{prop:minnorm}
We have $\min(L_{\QQQ})=6$.
\end{proposition}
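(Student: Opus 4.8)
The plan is to reduce the statement to a finite combinatorial assertion about the code $\QQQ$ and then to settle that assertion by a bounded search. The easy direction is immediate: since $L_{\QQQ}$ is even unimodular of rank $64$, the bound~\eqref{eq:minL} gives $\min(L_{\QQQ})\le 2+2\floor*{64/24}=6$, and all square-norms are even, so it suffices to prove that $L_{\QQQ}$ has no vector of square-norm $2$ or $4$; this forces $\min(L_{\QQQ})=6$, the value $6$ being attained already on $R^{32}\subset L_{\QQQ}$. To exploit the code structure, note that any $x=(x_1,\dots,x_{32})\in L_{\QQQ}\subset (R\dual)^{32}$ projects to a codeword $c=\pr(x)\in\QQQ$ with $x_i\equiv c_i\bmod R$ and $x^2=\sum_i x_i^2$. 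When $c_i=0$ we have $x_i\in R$, so $x_i^2=0$ or $x_i^2\ge\min(R)=6$; when $c_i\ne 0$ we have $x_i^2\ge m(c_i)$, where $m(a):=\min\set{y^2}{y\in R\dual,\; y\equiv a\bmod R}$ is the minimal square-norm in the coset $a\in\discg_R$. Hence a vector with $x^2\le 4$ vanishes on every zero-coset component, and then $x^2\ge N(c):=\sum_i m(c_i)$. Taking minimal representatives in each coset exhibits $N(c)$ as the square-norm of an actual vector of $L_{\QQQ}$, so $N(c)\in 2\Z$, and we obtain
\[
\min(L_{\QQQ})=\min\left(6,\ \min_{0\ne c\in\QQQ} N(c)\right).
\]
Thus the proposition is equivalent to the assertion that $N(c)\ge 6$ for every nonzero codeword.

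Next I would make this explicit and bound the search. A routine rank-$2$ closest-vector computation produces $m(a)=k_a/35$ for each $a$, where $k_a$ is a positive integer with $k_a\ge 6$ for $a\ne 0$, equality holding only for the four cosets of $\pm e_1\dual,\pm e_2\dual$. Since $N(c)=(\sum_i k_{c_i})/35$ is a positive even integer whenever $c\ne 0$, the inequality $N(c)<6$ would force $\sum_i k_{c_i}\in\{70,140\}$; as each nonzero $k_{c_i}$ is at least $6$, such a codeword would have Hamming weight at most $23$. The problem is therefore reduced to ruling out every nonzero codeword with $\sum_i k_{c_i}\le 140$.

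Finally I would carry out the search. Writing codewords as $c=v\,[\,I_{16}\mid B\,]$ with $v\in(\Z/35\Z)^{16}$, one enumerates candidates by backtracking over the coordinates while maintaining the running weighted weight $\sum_i k_{c_i}$ and pruning as soon as it exceeds $140$. The work is cut down by symmetry: $\Gamma_{\QQQ}$ surjects onto $\PSL_2(31)$ acting on the $32$ positions via~\eqref{eq:Gammaexact}, so it suffices to test one codeword per orbit, and one may fix the support against a transversal of this action. Alternatively, one may run an exhaustive short-vector enumeration on the explicit Gram matrix of $L_{\QQQ}$ and verify directly that no nonzero vector has square-norm below $6$.

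The main obstacle is feasibility, not concept. Because $|\QQQ|=35^{16}$ and the smallest coset-minima are as small as $6/35$, a bound on the Hamming weight alone is useless, since even a full-support codeword can have small weighted weight; the search must therefore track $\sum_i k_{c_i}$ throughout and lean on the $\PSL_2(31)$-action to keep the branching finite. Certifying that the pruned enumeration is genuinely complete is the delicate point of the proof.
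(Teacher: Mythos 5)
Your proposal is correct and follows essentially the same route as the paper: you reduce $\min(L_{\QQQ})=6$ to the nonexistence of a nonzero codeword $c\in\QQQ$ with $\sum_i m(c_i)\le 4$ (the paper's $\lambda$ and $\mu(w)$, tabulated as $35\lambda(n)$), and settle this by a symmetry-pruned backtrack search over codewords. The only differences are cosmetic — your evenness observation for $N(c)$ and weight bound are side remarks the paper does not need, and the paper's normalization (i)--(vi) uses the full group $\overline{\Gamma}_{\QQQ}$ (including the componentwise $H(R)=\{\pm1,\pm6\}$ scalings, not just the $\PSL_2(31)$ permutations) to prune the search, which is exactly the completeness bookkeeping you flag as the delicate point.
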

\begin{proof}
It is easy to calculate a basis of $L_{\QQQ}$ 
and the associated Gram matrix.
Therefore the minimal norm $\min (L_{\QQQ})$ can  be  calculated by,
for example, the function {\tt ShortestVectors} of {\tt GAP}~\cite{GAP}.
However, this method did not give an answer in reasonable time.
Hence 
we adopt the following method.
\par
For $n\in \discg_R= \Z/35\Z$, we put
$$
\lambda(n):=\min \;\;\set{x^2}{ x\in R\dual, \;x \bmod R=n}.
$$
Then the values of $\lambda(n)$ are calculated as in Table~\ref{table:minlifts}.
\begin{table}
$$
\begin{array}{c|cccccccccccc}
n & 0 & \pm1 & \pm 2 & \pm 3 & \pm 4 &\pm 5 & \pm 6 &\pm 7 &\pm 8 & \pm 9 & \pm 10 \\
\hline
35 \lambda (n) & 0 & 6 & 24 & 54 & 26 & 10 & 6 & 14 & 34 & 66  & 40 
\end{array}
$$
$$
\begin{array}{c|ccccccc}
n & \pm 11 & \pm12 & \pm 13 & \pm 14 & \pm 15 &\pm 16 & \pm 17 \\
\hline
35 \lambda (n) & 26 & 24 & 34 & 56 & 90 & 66 & 54  
\end{array}\phantom{aaaaaaaaa\,\,\,}
$$
\caption{$\lambda (n)$}\label{table:minlifts}
\end{table}
For a codeword
$$
w=[\;n_{\infty}, n_{0}\; |\; n_{1}, n_{3^2}, \dots, n_{3^{28}} \; |\;  n_{3}, n_{3^3},  n_{3^5}, \dots, n_{3^{29}}\;]\;\;\in\;\;  (\Z/35\Z)^{32},
$$
we put
$$
\mu(w):=\lambda(n_{\infty})+\lambda(n_{0})+\sum_{k=0}^{14}\lambda(n_{3^{2k}})+\sum_{k=0}^{14}\lambda(n_{3^{2k+1}}).
$$
In order to prove Proposition~\ref{prop:minnorm},
it is enough to show that there exists no non-zero codeword $w$ in $\QQQ$ with $\mu(w)\le 4$.
\par
We introduce an ordering $\prec$ on $\Z/35\Z$ by
$$
m\prec m\sprime\;\; \Longleftrightarrow\;\; \tilde{m}<\tilde{m}\sprime,
$$
where $\tilde{m}\in \Z$ is the representative  of $m\in \Z/35\Z$ satisfying $0\le \tilde{m}<35$.
For $n\in \Z/35\Z$,
we denote by  $\Stab(n)$ the stabilizer subgroup of $n$ in $H(R)=\{\pm 1, \pm 6\}\subset (\Z/35\Z)\sptimes$.
Then, for each codeword $w$ of $\QQQ$,
the orbit 
$$
w^{\overline{\Gamma}_{\QQQ}}:=\set{w^\gamma}{\gamma\in \overline{\Gamma}_{\QQQ}}
$$
of $w$ under the action of  $\overline{\Gamma}_{\QQQ}$
contains at least one element 
$$
[\;n_{\infty}, n_{0}, n_{1}, n_{9}, \dots, n_{3}, n_{27}, \dots\;]
$$
with the following properties:
\begin{enumerate}[(i)]
\item $\lambda(n_{\infty})\ge \lambda(n_{\nu})$ for any $\nu\in \F_p$, 
\item $n_{\infty}\succeq k n_{\infty} $ for any $k\in H(R)=\{\pm 1, \pm 6\}$,
\item $\lambda(n_{0})\ge \lambda(n_{\nu})$ for any $\nu\in \F_p\sptimes$, 
\item $n_{0}\succeq k n_{0}$ for any $k\in \Stab(n_{\infty})$,
\item $\lambda(n_{1})\ge \lambda(n_{3^{2k}})$ for $k=1, \dots, 14$, 
and if  $\lambda(n_{1})= \lambda(n_{3^{2k}})$, then $n_{1} \succeq n_{3^{2k}}$, 
\item $n_{1}\succeq   k n_{1}$ for any $k\in \Stab(n_{\infty})\cap \Stab (n_0)$.
\end{enumerate}
By backtrack searching, 
we look for a non-zero codeword satisfying $\mu(w)\le 4$ and the properties (i)-(vi),
and confirm that there exist no such codewords in $\QQQ$.
(The arrangement~\eqref{eq:sortP1} of the points of $\P^1(\F_p)$  is convenient  for this backtrack searching.)
This task was carried out 
by distributed computation on eight CPUs of $3$ GHz.
It took us about $75$ days.
\end{proof}
Thus Theorem~\ref{thm:main} is proved,
except for the fact that $\Gamma_{\QQQ}$ is of index $2$ in $\OG(L_{\QQQ})$.  
\section{Short vectors of $L_{\QQQ}$}\label{sec:short} 
In this section,
we prove Proposition~\ref{prop:EEE8}, 
and complete the proof of Theorem~\ref{thm:main}
by showing  $|\OG(L_{\QQQ})|=119040$.
\par
\medskip
By the theory of modular forms (see, for example,~\cite[Chapter 7]{SerreBook}),
we see that the theta function of $L_{\QQQ}$ is equal to 
$$
\sum_{v\in L_{\QQQ}} q^{v^2/2}=1+2611200 \,q^3+ 19525860480 \,q^4+ 19715393260800 \,q^5 +\cdots.
$$
In particular, 
the size of the set $\SSS$ of vectors $v\in L_{\QQQ}$ of square-norm $v^2=6$
is $2611200$.
We  calculate  the set $\SSS$ and its orbit decomposition by $\Gamma_{\QQQ}$  
by the following random search method.
The result is given in Table~\ref{table:orbdecomp},
and presented more explicitly in~\cite{compdata}.
\par\medskip
{\bf Random search method.}
Let $G$ be the Gram matrix of $L_{\QQQ}$.
We set 
$$
\SSS=\{\;\}, \quad \OOO=\{\;\}.
$$
While $|\SSS|\le 2611200$,
we do the following calculation.
Let $U\in \GL_{64} (\Z)$ be a random unimodular matrix of size $64$  with integer components.
We apply the LLL  algorithm by Lenstra, Lenstra and Lov\'{a}sz~\cite{LLL} (see also~\cite[Chapter 2]{CohBook})
to 
$$
{}^U G:=U\cdot G\cdot {}^T U
$$
with the sensitivity parameter $1$.
Suppose that  we find a vector $v\sprime\in \Z^{64}$ such that $v\sprime \cdot {}^U G\cdot {}^T v\sprime =6$.
Then $v:=v\sprime \cdot U$ is a vector of square-norm $6$ in $L_{\QQQ}$.
If $v$ is not yet in $\SSS$, then we append its orbit
$o:=\shortset{v^{\gamma}}{\gamma\in \Gamma_{\QQQ}}$ to $\SSS$,
and add the set $o$ to $\OOO$.
When $|\SSS|$ reaches $2611200$,
the set $\SSS$  is equal to the set of vectors in $L_{\QQQ}$ of square-norm $6$
and $\OOO$ gives the orbit decomposition of $\SSS$ by  $\Gamma_{\QQQ}$.
\par
\medskip
\begin{table}
$$
\begin{array}{c|ccccc}
\textrm{size of an orbit} & 128  & 3968 & 11904 & 19840 & 59520 \\
\hline 
\textrm{number of orbits} & 2 & 4 & 3 & 6 & 41
\end{array}
$$
\vskip 10pt 
\caption{Orbit decomposition of $\SSS$ by $\Gamma_{\QQQ}$}
\label{table:orbdecomp}
\end{table}
\par
The set $\SSS$  is decomposed into $56$ orbits by $\Gamma_{\QQQ}$.
We choose an element $v\spar{i}$ from each orbit  $o_i$
for $i=1, \dots, 56$.
Let $\varepsilon_1, \dots, \varepsilon_8$ be the standard basis of $E_8(3)$.
We have  $\varepsilon_i^2=6$ for $i=1, \dots, 8$.
If $L_{\QQQ}$ contained a sublattice isomorphic to $E_8(3)$,
then there would exist an embedding
$$
\iota\colon \{\varepsilon_1, \dots, \varepsilon_8\}\inj \SSS
$$
that preserves the intersection form.
By the action of $\Gamma_{\QQQ}$,
we can assume that
$\iota(\varepsilon_1)$ is equal to the representative element $v\spar{i}$ of some orbit $o_i$.
By backtrack searching,
we confirm that there exists no such embedding $\iota$.
Thus Proposition~\ref{prop:EEE8} is proved.
%
%
\par
For $v\in \SSS$, we define its \emph{type} $\tau(v)$ by 
$$
\tau(v):=[\,t_0(v), \; t_1(v), \; t_2(v), \; t_3(v), \; t_6(v)\,],
$$
where $t_m(v)$ is the size of the set 
$$
\set{x\in \SSS}{\intf{x, v}=m}.
$$
Then we have $t_6(v)=1$ and 
$$
t_0(v)+2(\, t_1(v)+ t_2(v)+t_3(v) +t_6 (v)\,)=2611200
$$
for any $v\in \SSS$.
The set $\SSS$ is decomposed 
into the disjoint union
$$
\SSS\;=\;\bigsqcup\, \SSS_{\tau}, \quad \textrm{where $\SSS_{\tau}:=\shortset{v\in \SSS}{\tau(v)=\tau}$}, 
$$
according to the types,
and each $\SSS_{\tau}$ is a disjoint union of orbits $o_i$ of the action of $\Gamma_{\QQQ}$.
In Table~\ref{table:taus}, we give the list of all possible types $\tau$
and the size of each set  $\SSS_{\tau}$.
\begin{table}
{
$$
\begin{array}{ccccc|c}
t_{0}&t_{1}&t_{2}&t_{3}&t_{6}& \textrm{the size of $\SSS_{\tau}$}\\
\hline 
1368552 & 583866 & 37323 & 134 & 1 & 39680\\
1370112 & 582876 & 37503 & 164 & 1 & 39680\\
1371152 & 582216 & 37623 & 184 & 1 & 119040\\
1371880 & 581754 & 37707 & 198 & 1 & 59520\\
1372088 & 581622 & 37731 & 202 & 1 & 476160\\
1372192 & 581556 & 37743 & 204 & 1 & 119040\\
1372504 & 581358 & 37779 & 210 & 1 & 119040\\
1372608 & 581292 & 37791 & 212 & 1 & 119040\\
1372816 & 581160 & 37815 & 216 & 1 & 59520\\
1372920 & 581094 & 37827 & 218 & 1 & 158720\\
1373128 & 580962 & 37851 & 222 & 1 & 59520\\
1373232 & 580896 & 37863 & 224 & 1 & 59520\\
1373440 & 580764 & 37887 & 228 & 1 & 59520\\
1373648 & 580632 & 37911 & 232 & 1 & 119040\\
1373752 & 580566 & 37923 & 234 & 1 & 119040\\
1373960 & 580434 & 37947 & 238 & 1 & 59520\\
1374168 & 580302 & 37971 & 242 & 1 & 119040\\
1374272 & 580236 & 37983 & 244 & 1 & 59520\\
1374480 & 580104 & 38007 & 248 & 1 & 75648\\
1374584 & 580038 & 38019 & 250 & 1 & 59520\\
1374688 & 579972 & 38031 & 252 & 1 & 59520\\
1374896 & 579840 & 38055 & 256 & 1 & 178560\\
1375000 & 579774 & 38067 & 258 & 1 & 59520\\
1375104 & 579708 & 38079 & 260 & 1 & 119040\\
1376872 & 578586 & 38283 & 294 & 1 & 71424\\
1377392 & 578256 & 38343 & 304 & 1 & 23808\\
\end{array}
$$

}
\vskip 10pt 
\caption{Decomposition of   $\SSS$ by types}\label{table:taus}
\end{table}
Note that the action of $\OG(L_{\QQQ})$ preserves each $\SSS_{\tau}$.
Let $\SSS_0$ be the set  of vectors of type
$$
[\;1377392, \; 578256, \; 38343, \; 304, \; 1\;].
$$
The size $23808$ of $\SSS_0$ is minimal among all $\SSS_{\tau}$.
(See the last line of Table~\ref{table:taus}.)
This subset $\SSS_0$ is a union of two orbits $o_{k_1}$ and $o_{k_2}$ of size $11904$.
By direct calculation, we confirm the following fact:
\begin{equation}\label{eq:seven}
\parbox{10cm}{For each $v\in \SSS_0$, there exist exactly seven vectors $v\sprime$ 
in $\SSS_0$ such that $\intf{v, v\sprime}=-3$.}
\end{equation}
We find a sequence $V_0=[v_1, \dots, v_{64}]$
of vectors $v_i$ of $\SSS_0$   satisfying the following: 
\begin{enumerate}[(i)]
\item $\intf{v_i, v_j}=-3$ if and only if $|i-j|=1$, and 
\item $v_1, \dots, v_{64}$ form a basis of $L_{\QQQ}$.
\end{enumerate}
See~\cite{compdata} for the explicit vector representations of these 
vectors $v_1, \dots, v_{64}$.
We then enumerate all the sequences
$V\sprime=[v_1\sprime, \dots, v_{64}\sprime]$ 
of vectors of $\SSS_0$ such that
\begin{enumerate}[(a)]
\item $v_1\sprime$ is either $v\spar{k_1}$ or  $v\spar{k_2}$,
where $v\spar{k_{\nu}}$ is the fixed representative of the orbit $o_{k_{\nu}}$ contained in $\SSS_0$, and
\item $\intf{v_i, v_j}=\intf{v_i\sprime, v_j\sprime}$ for $i, j=1, \dots, 64$.
\end{enumerate}
Then we obtain exactly 
$10$ sequences $V_1, \dots, V_{10}$ with these properties.
Since the action of $\OG(L_{\QQQ})$ preserves $\SSS_{0}=o_{k_1}\sqcup o_{k_2}$
and the action of $\Gamma_{\QQQ}$ is transitive on each of $o_{k_1}$ and $o_{k_2}$,
we see that, for each $g\in \OG(L_{\QQQ})$,
there exists an element $h\in \Gamma_{\QQQ}$ such that
$$
V_0^{gh}=[v_1^{gh}, \dots, v_{64}^{gh}]\in \{V_1, \dots, V_{10}\}.
$$
For each $i=1, \dots, 10$,
we calculate
the matrix $g_i\in \OG(L_{\QQQ}\tensor \Q)$ such that
$V_0^{g_i}=V_i$.
It turns out that these $g_i$ preserve $L_{\QQQ}\subset L_{\QQQ}\tensor \Q$,
and hence we have $g_i\in \OG(L_{\QQQ})$.
By construction,
the group $\OG(L_{\QQQ})$ is generated by $\Gamma_{\QQQ}$ together with $g_1, \dots, g_{10}$.
We calculate the order of $|\OG(L_{\QQQ})|$.
It turns out that
$$
|\OG(L_{\QQQ})|=119040=2 |\Gamma_{\QQQ}|.
$$
Thus the proof of Theorem~\ref{thm:main} is completed.
\begin{remark}
If $g\in \OG(L_{\QQQ})$ is not contained in $\Gamma_{\QQQ}$,
then $g$ does not preserve the sublattice $R^{32}\subset L_{\QQQ}$,
and hence does not induce an automorphism of the code $\QQQ$.
\end{remark}
\section{Another construction of $L_{\QQQ}$}
Let $o_1$ and $o_2$ be the two orbits of size $128$ in $\SSS$
(see Table~\ref{table:orbdecomp}).
Let $\gen{o_1}$ and $\gen{o_2}$ be the sublattices of $L_{\QQQ}$ generated 
by $o_1$ and by $o_2$, respectively.
It is easily confirmed that both $\gen{o_1}$ and $\gen{o_2}$ are of rank $64$ and that
\begin{equation}\label{eq:orbitssum}
\gen{o_1}+\gen{o_2}=L_{\QQQ}.
\end{equation}
For simplicity, we put
$$
E:=\{e_1\spar{1}, e_2\spar{1}, \dots, e_1\spar{32}, e_2\spar{32}\},
$$
where $e_1\spar{i}$ and $e_2\spar{i}$ are the standard basis of the $i$th component  of $R^{32}$ satisfying~\eqref{eq:GramR}.
One of the two orbits of size $128$, say $o_1$,
is equal to the union of $E$ and $-E$.
For each $e_{\nu}\spar{i}\in E\subset o_1$,
there exists a unique vector $f_{+\nu}\spar{i}\in o_2$ (resp.~ $f_{-\nu}\spar{i}\in o_2$) such that
$\intf{e_{\nu}\spar{i}, f_{+\nu}\spar{i}}=2$ (resp.~ $\intf{e_{\nu}\spar{i}, f_{-\nu}\spar{i}}=-2$).
The mapping
$$
e_{1}\spar{i}\mapsto  f_{+1}\spar{i}, \quad e_{2}\spar{i}\mapsto  f_{-1}\spar{i}
$$
induces an isometry
$$
\rho\colon \gen{o_1}\isom \gen{o_2},
$$
and hence gives rise to $\rho\tensor\Q\in \OG(R^{32}\tensor\Q)$.
\begin{remark}
The orthogonal transformation  $\rho\tensor\Q$ of $R^{32}\tensor\Q$
does not preserve $L_{\QQQ}\subset R^{32}\tensor\Q$.
Indeed, the order of   $\rho\tensor\Q\in \OG(R^{32}\tensor\Q)$ is infinite.
\end{remark}
The matrix representation $M_{\rho}$ of $\rho\tensor\Q$ with respect to the basis $E$ of $R^{32}\tensor\Q$
is related to generalized quadratic residue codes   as follows.
Let
$T$ be the $32\times 32$ matrix whose rows and columns are indexed by $\P^1(\F_{31})$
sorted as in~\eqref{eq:sortP1}, 
and whose $(\mu, \nu)$th component is the string  
$$
\begin{cases}
\str{a} & \textrm{if $\mu=\infty$ and $\nu\ne \infty$},\\
\str{b} & \textrm{if $\mu=\infty$ and $\nu=\infty$},\\
\str{d} & \textrm{if $\mu=\nu\ne \infty$}, \\
\str{s} & \textrm{if $\mu\ne\infty$, $\nu\ne\infty$, $\mu\ne\nu$,  and  $\chi_{31}(\mu-\nu)=1$}, \\
\str{t} & \textrm{if $\mu\ne\infty$, $\nu\ne\infty$, $\mu\ne\nu$,  and  $\chi_{31}(\mu-\nu)=-1$}, \\
\str{e} & \textrm{if $\mu\ne\infty$ and $\nu=\infty$}\; ; 
\end{cases}
$$
that is, $T$ is the template matrix of quadratic residue codes of length $32$.
We put
\begin{eqnarray*}
&&
m_a:=\frac{1}{35}
\left[\begin{array}{cc} 
1 & -6 \\ 
6 & -1 
\end{array}\right],
\;\;\;
m_b:=\frac{1}{35}
\left[\begin{array}{cc} 
12 & -2 \\ 
2 & -12 
\end{array}\right],
\;\;\;
m_d:=\frac{1}{35}
\left[\begin{array}{cc} 
12 & -2 \\ 
2 & -12 
\end{array}\right],
\\
&&
m_s:=\frac{1}{35}
\left[\begin{array}{cc} 
-6 & 1 \\ 
-1 & 6 
\end{array}\right],
\;\;\;
m_t:=\frac{1}{35}
\left[\begin{array}{cc} 
6 & -1 \\ 
1 & -6 
\end{array}\right],
\;\;\;\;\;\;\;
m_e:=\frac{1}{35}
\left[\begin{array}{cc} 
-1 & 6 \\ 
-6 & 1 
\end{array}\right].
\end{eqnarray*}
\begin{proposition}
The matrix representation $M_{\rho}$ of $\rho\tensor\Q$ with respect to the basis $E$ of $R^{32}\tensor\Q$
is obtained from the template matrix $T$ by substituting
$\str{a}$ with $m_a$,
$\str{b}$ with $m_b$,
$\str{d}$ with $m_d$,
$\str{s}$ with $m_s$,
$\str{t}$ with $m_t$, and 
$\str{e}$ with $m_e$.
\end{proposition}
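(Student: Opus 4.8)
\emph{The plan} is to reduce the statement to a short list of inner-product computations between the two size-$128$ orbits $o_1=E\cup(-E)$ and $o_2$, and then to organize those computations by the symmetry of the construction. First I would exploit that $R^{32}=\bigoplus_i R\spar{i}$ is an orthogonal direct sum: the $R\spar{\nu}\tensor\Q$-component of any $w\in R^{32}\tensor\Q$ is recovered from the two rational numbers $\intf{w,e_1\spar{\nu}}$ and $\intf{w,e_2\spar{\nu}}$ by applying the inverse Gram matrix $\frac{1}{35}\left[\begin{smallmatrix} 6 & -1 \\ -1 & 6\end{smallmatrix}\right]$ of $R$ (the Gram matrix being~\eqref{eq:GramR}). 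Hence the $(\mu,\nu)$-block of $M_{\rho}$ is a fixed linear function of the four numbers $\intf{\rho(e_k\spar{\mu}),e_l\spar{\nu}}=\intf{f_{\pm 1}\spar{\mu},e_l\spar{\nu}}$ with $k,l\in\{1,2\}$. Since $\rho(e_k\spar{\mu})\in o_2$ and $e_l\spar{\nu}\in o_1$, these are exactly the pairings between the two orbits, which take values in $\{0,\pm 1,\pm 2,\pm 3\}$; so it suffices to determine this pairing pattern and then transport it through $\frac{1}{35}\left[\begin{smallmatrix} 6 & -1 \\ -1 & 6\end{smallmatrix}\right]$.

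Next I would reduce the $32\times 32$ array of blocks to six cases by symmetry. By Lemma~\ref{lem:etazeta} the permutations $\eta\colon \nu\mapsto \nu+1$ and $\zeta\colon \nu\mapsto \alpha^2\nu$ lie in the image of $f_{\QQQ}$, so they lift to elements of $\Gamma_{\QQQ}\subset\OG(L_{\QQQ})$ that permute the copies $R\spar{i}$ according to their action on $\P^1(\F_{31})$, up to an $H(R)$-twist on each copy. These lifts preserve $o_1$, $o_2$ and the bilinear form, they fix $\infty$, and they preserve the Legendre value $\chi_{31}(\mu-\nu)$ for $\mu,\nu\in\F_{31}$. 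A direct orbit count shows that $\gen{\eta,\zeta}$ acts transitively on $\F_{31}$ and that its orbits on the ordered pairs of $\P^1(\F_{31})$ are exactly six: $(\infty,\nu)$ with $\nu\ne\infty$, the pair $(\infty,\infty)$, the diagonal $\mu=\nu\ne\infty$, the two off-diagonal classes according to $\chi_{31}(\mu-\nu)=\pm 1$, and $(\mu,\infty)$ with $\mu\ne\infty$. These are precisely the six symbols $\str{a},\str{b},\str{d},\str{s},\str{t},\str{e}$ of the template $T$, which explains why those six symbols control the whole matrix.

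It then remains to compute one representative block of each of the six types and to check that they equal $m_a,m_b,m_d,m_s,m_t,m_e$. For example, the defining relations $\intf{e_1\spar{\mu},f_{+1}\spar{\mu}}=2$ and $\intf{e_1\spar{\mu},f_{-1}\spar{\mu}}=-2$, together with the vanishing of the complementary self-pairings, already pin down the diagonal block as $m_d$; the residue off-diagonal block comes out as $m_s$ and the non-residue block as its negative $m_t=-m_s$, while likewise $m_e=-m_a$ and $m_b=m_d$. Each of these is a finite, explicit $2\times 2$ verification that uses only the orbit data of $\SSS$ computed in Section~\ref{sec:short} and the Gram matrix of $L_{\QQQ}$.

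The hard part will be making the symmetry reduction genuinely rigorous rather than heuristic. The lifts of $\eta$ and $\zeta$ do not act by bare coordinate permutations: each carries an $H(R)$-twist from $\{\pm 1,\pm 6\}$, which on a single copy can negate the basis or interchange $e_1\spar{i}$ and $e_2\spar{i}$. Because $\rho$ is normalized by anchoring to $e_1\spar{\mu}$ and by a sign choice in $f_{\pm 1}\spar{\mu}$, I must check that these twists interact compatibly with that normalization, so that the block is \emph{literally} constant on each orbit and not merely constant up to a sign or a transpose. Confirming this compatibility is exactly what forces the coincidences $m_t=-m_s$, $m_e=-m_a$ and $m_b=m_d$; once it is settled, the generator $\xi\colon\nu\mapsto -1/\nu$ excluded from $\gen{\eta,\zeta}$ is seen to be the one symmetry that would merge the residue and non-residue classes, so its absence is precisely what keeps $m_s$ and $m_t$ distinct, and the remaining verification is routine.
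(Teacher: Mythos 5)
The paper contains no argument for this proposition at all: it is stated as a fact to be verified by direct computation, the vectors of $o_1$ and $o_2$ having been determined explicitly in Section~\ref{sec:short} (and recorded in~\cite{compdata}), so that $M_{\rho}$ can simply be computed and compared entry-by-entry with the substituted template. Your overall strategy --- recover each $2\times 2$ block of $M_{\rho}$ from the four pairings $\intf{\rho(e_k\spar{\mu}), e_l\spar{\nu}}$ via the inverse Gram matrix $\frac{1}{35}\left[\begin{smallmatrix} 6 & -1 \\ -1 & 6\end{smallmatrix}\right]$, and cut the $32\times 32$ array of blocks down to six representatives using $\gen{\eta,\zeta}$, whose six orbits on ordered pairs of $\P^1(\F_{31})$ match the six template symbols --- is sound and more structural than the paper's brute verification. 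Moreover, the ``hard part'' you flag is easier than you fear: by the proof of Lemma~\ref{lem:etazeta}, $\eta$ and $\zeta$ preserve $\QQQ$ as \emph{bare} permutations, i.e.\ with trivial $H(R)$-twist, so their lifts to $\Gamma_{\QQQ}$ are pure block permutations of $(R\dual)^{32}$. Since these lifts preserve the $\Gamma_{\QQQ}$-orbits $o_1$, $o_2$ and the bilinear form, and since each $f_{\pm\nu}\spar{i}$ is characterized by a pairing condition against a vector of $E$, they commute with $\rho$ on the nose, and the blocks are \emph{literally} constant on each of the six orbits; no sign or transpose ambiguity arises for these two generators.

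Two of your specific claims are, however, genuinely wrong. First, the defining relations do not pin down the diagonal block: they prescribe only pairings against $e_1\spar{\mu}$ (two linear conditions out of four per block), and in fact $m_d$ encodes $\intf{e_1\spar{\mu}, \rho(e_2\spar{\mu})}=0$ and $\intf{e_2\spar{\mu}, \rho(e_2\spar{\mu})}=-2$ --- multiply the second row $\frac{1}{35}(2,-12)$ of $m_d$ by the Gram matrix~\eqref{eq:GramR} --- which is \emph{not} the relation $\intf{e_1\spar{\mu},\rho(e_2\spar{\mu})}=-2$ you feed in (this tension even suggests the paper's subscript in $e_2\spar{i}\mapsto f_{-1}\spar{i}$ should be read as $f_{-2}\spar{i}$); in any case the remaining entries, including your ``vanishing of the complementary self-pairings,'' can only be read off from the explicit coordinates of $o_2$, not deduced from the definition of $\rho$. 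Second, the coincidences $m_t=-m_s$, $m_e=-m_a$, $m_b=m_d$ are not ``forced'' by twist-compatibility of the $\eta,\zeta$ lifts: as noted above those lifts carry no twists at all, and the $\str{s}$- and $\str{t}$-classes (likewise $\str{a}$ and $\str{e}$, $\str{b}$ and $\str{d}$) lie in \emph{different} $\gen{\eta,\zeta}$-orbits, so nothing in the symmetry group you invoke relates them; they are empirical outputs of the six block computations, and any symmetry that would explain them (e.g.\ via $\xi$, whose $\Lambda(\xi)$-twist by $\pm 6$ acts on $R$ by $\pm g_1$, interchanging $e_1\spar{i}$ and $e_2\spar{i}$ up to sign) is precisely what you have not analyzed. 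So your reduction to six blocks is correct and rigorizable, but the proposal then asserts the values of those blocks by an argument that does not hold; replacing it with six honest computations from the data of Section~\ref{sec:short} completes the proof.
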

By~\eqref{eq:orbitssum},
we obtain another method of construction of $L_{\QQQ}$ as follows.
\begin{proposition}
The lattice $L_{\QQQ}$ is generated by $E$ and $E^{\rho}$ in $R^{32}\tensor\Q$ .
\end{proposition}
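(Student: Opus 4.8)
The plan is to deduce the proposition from~\eqref{eq:orbitssum} together with two elementary remarks about generating sets; no new computation is needed. First I would observe that $\gen{o_1}=\gen{E}$: since $o_1=E\cup(-E)$ and a sublattice is closed under negation, adjoining the negatives of the vectors of $E$ does not enlarge the sublattice they generate. As $E$ is the standard basis of $R^{32}$, this in fact gives $\gen{o_1}=\gen{E}=R^{32}$.

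Next I would transport this through $\rho$. By construction $\rho\colon\gen{o_1}\isom\gen{o_2}$ is an isometry, hence an isomorphism of lattices, so it carries any generating set of its source onto a generating set of its target. Applying $\rho$ to the generating set $E$ of $\gen{o_1}$ therefore shows that
$$
E^\rho=\set{e^\rho}{e\in E}=\set{f_{+1}\spar{i},\,f_{-1}\spar{i}}{i=1,\dots,32}
$$
generates $\gen{o_2}$; that is, $\gen{E^\rho}=\gen{o_2}$.

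Finally I would combine the two remarks with~\eqref{eq:orbitssum} to conclude
$$
\gen{E\cup E^\rho}=\gen{E}+\gen{E^\rho}=\gen{o_1}+\gen{o_2}=L_{\QQQ},
$$
which is precisely the assertion that $L_{\QQQ}$ is generated by $E$ and $E^\rho$.

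I do not expect a genuine obstacle at this step: all of the substance has already been invested in verifying~\eqref{eq:orbitssum}, that the two rank-$64$ sublattices $\gen{o_1}$ and $\gen{o_2}$ span $L_{\QQQ}$, and in identifying $o_1$ with $E\cup(-E)$. Granting those, the statement is a formal consequence of the principle that an isometry sends generating sets to generating sets. The only point deserving a moment's care is that the $64$ vectors of $E$, rather than all $128$ vectors of $o_1$, already generate $\gen{o_1}$ --- which is immediate from closure under negation.
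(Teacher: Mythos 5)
Your proposal is correct and matches the paper's own (largely implicit) argument: the paper derives this proposition directly from~\eqref{eq:orbitssum}, exactly as you do, using that $o_1=E\cup(-E)$ gives $\gen{o_1}=\gen{E}$ and that the isometry $\rho\colon\gen{o_1}\isom\gen{o_2}$ carries the generating set $E$ to a generating set $E^\rho$ of $\gen{o_2}$. Your only addition is to spell out these two formal remarks, which the paper leaves to the reader.
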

\par
\bigskip
{\bf Note added on 2018/05/04:}
Masaaki Harada confirmed $\min(L_{\QQQ})=6$ by a direct computation using {\tt Magma}.
It took about 27 days.
We thank Professor Masaaki Harada for this heavy computation.
\bibliographystyle{plain}

%
%

%
%
\end{document}